\DeclareSymbolFont{AMSb}{U}{msb}{m}{n}
\newtheorem{cor}{Corollary}[section]
\newtheorem*{cor*}{Corollary}
\newtheorem{lem}[cor]{Lemma}
\newtheorem*{lem*}{Lemma}
\newtheorem{thm}[cor]{Theorem}
\newtheorem*{thm*}{Theorem}
\newtheorem{conj}[cor]{Conjecture}
\newtheorem*{conj*}{Conjecture}
\newtheorem{prop}[cor]{Proposition}
\newtheorem*{prop*}{Proposition}
\theoremstyle{definition}
\newtheorem{defn}[cor]{Definition}
\theoremstyle{remark}
\newtheorem{rmk}[cor]{Remark}
\newtheorem{exa}[cor]{Example}
\newcommand{\bG}{\mathbb{G}}
\newcommand{\bP}{\mathbb{P}}
\newcommand{\cD}{\mathcal{D}}
\newcommand{\cF}{\mathcal{F}}
\newcommand{\cG}{\mathcal{G}}
\newcommand{\cV}{\mathcal{V}}
\newcommand{\cX}{\mathcal{X}}
\newcommand{\cY}{\mathcal{Y}}
\newcommand{\cW}{\mathcal{W}}
\newcommand{\sC}{\mathscr{C}}
\newcommand{\sF}{\mathscr{F}}
\newcommand{\sX}{\mathscr{X}}
\newcommand{\sY}{\mathscr{Y}}
\renewcommand{\dim}{\operatorname{dim}}
\newcommand{\ord}{\operatorname{ord}}
\newcommand{\sm}{{\operatorname{sm}}}
\newcommand{\sing}{{\operatorname{sing}}}
\newcommand{\aut}{\operatorname{Aut}}
\title{Monodromy of projections of hypersurfaces}
\author{Maria Gioia Cifani}
\address[M.G.C.]{Department of Mathematics 'F. Casorati', University of Pavia, via Ferrata 5, 27100 Pavia, Italy}
\email{mariagioia.cifani01@universitadipavia.it}
\author{Alice Cuzzucoli}
\address[A.C.]{Department of Mathematics, University of Warwick, Coventry, CV4 7AL, Warwickshire, England}
\email{a.cuzzucoli@warwick.ac.uk}
\author{Riccardo Moschetti}
\address[R.M]{Department of Mathematics 'F. Casorati', University of Pavia, via Ferrata 5, 27100 Pavia, Italy}
\email{riccardo.moschetti@unipv.it}
\begin{document}
\begin{abstract}
Let $X$ be an irreducible, reduced complex projective hypersurface of degree $d$. A point $P$ not contained in $X$ is called uniform if the monodromy group of the projection of $X$ from $P$ is isomorphic to the symmetric group $S_d$. We prove that the locus of non--uniform points is finite when $X$ is smooth or a general projection of a smooth variety. In general, it is contained in a finite union of linear spaces of codimension at least $2$, except possibly for a special class of hypersurfaces with singular locus linear in codimension $1$. Moreover, we generalise a result of Fukasawa and Takahashi on the finiteness of Galois points.
\end{abstract}
\maketitle

\section{Introduction}
This paper aims at studying the monodromy group of projections of irreducible and reduced complex projective hypersurfaces.
Several accounts of this study, in particular in the case of projective curves, can be found in literature. 
The classical \textit{Uniform Position Principle} due to Castelnuovo, in the formulation of Harris \cite{JHCurves}, can be applied to show that the monodromy group of a general projection of a curve is the symmetric group. It has been proved in \cite{GN, GM, GS} that the monodromy group of an indecomposable projection of a general curve of genus greater than $3$ is either the symmetric or the alternating group. More recently, all the monodromy groups of projections of a smooth planar curve of degree smaller or equal than $5$ have been classified (see \cite{MY1, Miura1, Y}).\\
This paper is motivated by the work of Pirola and Schlesinger \cite{PS}, in which the authors consider the monodromy group of projections of any irreducible reduced projective curve. In our case, we fix a hypersurface $X\subset\bP^{n+1}$ of degree $d$ and we consider all its natural linear projections $\pi_P$ from a point $P\notin X$. We classify such source points by means of the monodromy group of the associated projection. Our final goal is a characterisation of the locus of points for which the associated monodromy group is strictly contained in the symmetric group $S_d$. 

\begin{defn}
The point $P$ is called \emph{uniform} if $M(\pi_P)\cong S_d$ and \emph{non--uniform} otherwise.
We denote by $\cW(X)$ the locus of non--uniform points of $X$.
\end{defn}

Specifically, we are interested in two classes of non--uniform points: a point $P \in \cW(X)$ is called \emph{Galois} if the field extension associated with the map $\pi_P$ is a Galois extension. A point $P \in \cW(X)$ is called \emph{decomposable} if the projection $\pi_P$ is decomposable, i.e. if it factors via two morphisms of degree greater than $1$. The loci of Galois and decomposable points are denoted by $\cG(X)$ and $\cD(X)$, respectively. 

The non--uniform locus $\cW(X)$ is constructible and a first step in order to understand its structure is to compute its dimension. The Uniform Position Principle extends to hypersurfaces by taking general hyperplane sections: indeed, for any irreducible, reduced hypersurface $X \subset \bP^{n+1}$ we have $\dim \cW(X) < n+1$. This bound is far from optimal. The case of curves has been solved by Pirola and Schlesinger: in the work \cite{PS}, the authors proved  that the locus of non--uniform points associated with projections of an irreducible reduced plane curve is finite. The case of surfaces in $\bP^3$ has been partially covered in \cite{CMS}, where it is proved that the locus of non--uniform points of a smooth surface in $\bP^3$ is finite. All these bounds are optimal.

The main result of this paper describes a property of the locus of non--uniform points for projective hypersurfaces:

\begin{thm}\label{thm:main}
Let $X$ be an irreducible, reduced hypersurface of $\ \bP^{n+1}$, $n \geq 2$. Then $\cW(X)$ is contained in a finite union of linear spaces of codimension $2$ in $\bP^{n+1}$, unless $X^\sing$ is the union of at least $2$ components isomorphic to $\bP^{n-1}$. In this case, $\cW(X)$ must be a union of rational curves, lying in the intersection of the tangent cones to points in $X^\sing$.
\end{thm}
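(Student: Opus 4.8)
The plan is to characterise non-uniformity through two group-theoretic obstructions and then control each geometrically. Since $X$ is irreducible, $M(\pi_P)$ is a transitive subgroup of $S_d$, and by Jordan's theorem a transitive group that is \emph{primitive} and contains a \emph{transposition} must be all of $S_d$. Hence if $P$ is non-uniform then either $M(\pi_P)$ is imprimitive --- equivalently $\pi_P$ is decomposable, so $P\in\cD(X)$ --- or $M(\pi_P)$ is primitive but contains no transposition. The first goal is to show that the second obstruction is governed by the tangent lines through $P$: a line $\ell\ni P$ that is tangent to $X$ at a single smooth point with contact of order exactly $2$ and meets $X$ transversally elsewhere produces a branch component with local monodromy a transposition, hence a transposition in $M(\pi_P)$. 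Thus a primitive non-uniform $P$ admits \emph{no such simple tangent line}: every tangent line through $P$ is either asymptotic at its point of contact (contact $\ge 3$), or tangent at two or more points, or passes through $X^\sing$.

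The geometric heart is to analyse this tangential degeneracy. The ramification locus of $\pi_P$ is the polar intersection $R_P=X\cap\Delta_P$, of dimension $n-1$, where $\Delta_P$ is the first polar of $X$ with respect to $P$; for a smooth $Q\in R_P$ the line $\overline{PQ}$ is tangent at $Q$, and its failure to be simple is detected by the second fundamental form of $X$ at $Q$ (for the asymptotic alternative), by the bitangent developable from $P$, and by the cone over $X^\sing$ with vertex $P$. Each of these loci is, for generic $P$, a proper subvariety of $R_P$, so the generic tangent line is simple and $P$ is uniform. To force non-uniformity on a \emph{positive-dimensional} set one needs $R_P$ to lie entirely inside one of these degeneracy loci; I would show that asymptotic degeneracy along all of $R_P$ makes the Gauss map of $X$ drop rank, pushing $X$ towards a cone or a scroll and, crucially, forcing linear components of codimension $2$ into $X^\sing$, while the bitangent alternative is ruled out by an incidence and dimension count. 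This is the step I expect to be the main obstacle, since it must simultaneously exclude the bitangent case and convert tangential degeneracy into the linear structure appearing in the statement.

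To obtain the global linear structure I would run an induction on $n$ via hyperplane sections, the base case $n=1$ being the theorem of Pirola--Schlesinger. Fix $P\in\cW(X)$ and take a general hyperplane $\Pi\cong\bP^{n}$ through $P$; then $C=X\cap\Pi$ is an irreducible, reduced hypersurface of degree $d$ in $\Pi$ with $P\notin C$, and the preimage under $\pi_P$ of a general hyperplane of the target is exactly $C$. By the Zariski--Lefschetz theorem, the inclusion of a general hyperplane of the target into $\bP^{n}$ induces a surjection on the fundamental groups of the complements of the branch loci, whence $M(\pi_P)=M(\pi_P|_C)$ for general $\Pi\ni P$; thus $P$ is non-uniform for $X$ if and only if it is non-uniform for $C$. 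Since a general $\Pi$ through $P\notin X$ meets $X$ transversally along $X^\sm$, one has $C^\sing=X^\sing\cap\Pi$. Applying the inductive hypothesis to $C\subset\Pi$ and slicing a component $W\subseteq\cW(X)$ of dimension $k$ by $\Pi$ gives $W\cap\Pi\subseteq\cW(C)$, so $k-1\le n-2$ and hence $\codim_{\bP^{n+1}}\cW(X)\ge 2$; that a variety whose general hyperplane section is a union of linear spaces of the expected dimension is itself a union of linear spaces then upgrades the bound to the asserted linear structure.

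Finally, the dichotomy in the statement is inherited from the inductive step. The exceptional case for $C$ is $C^\sing=X^\sing\cap\Pi$ being a union of at least two copies of $\bP^{n-2}$; since this holds for the general $\Pi$ through $P$, each such $\bP^{n-2}$ is a general hyperplane section of a component of $X^\sing$, forcing $X^\sing$ to be a union of at least two copies of $\bP^{n-1}$ --- precisely the exceptional hypersurfaces of the theorem. In this regime the inductive conclusion returns unions of rational curves rather than linear spaces, and I would identify these, as in the curve case, with the rulings cut on $X$ by the planes through pairs of singular points; they lie in the intersection of the tangent cones to the points of $X^\sing$, yielding the stated description of $\cW(X)$. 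The decomposable points $\cD(X)$ are absorbed into the same induction, as imprimitivity of $M(\pi_P)$ is equivalent to that of $M(\pi_P|_C)$ under the Lefschetz isomorphism; controlling their contribution to the top-dimensional part of $\cW(X)$, and verifying that it produces no non-linear components outside the exceptional case, is the remaining delicate point.
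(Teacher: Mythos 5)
Your reduction via Jordan's theorem and simple tangent lines matches the paper's starting point (Lemma \ref{lem:permutations} plays exactly this role there), and your slicing argument --- Zariski--Lefschetz giving $M(\pi_P)=M(\pi_P|_C)$ for a general hyperplane through $P$, plus the Pirola--Schlesinger base case --- does prove that $\codim \cW(X)\geq 2$; the paper obtains this bound the same way (see the remark after Theorem \ref{thm:basecase}). But the induction cannot deliver the actual content of the theorem, namely the linearity and the dichotomy, and the failure is concentrated exactly where the theorem is hard: on one-dimensional components of $\cW(X)$. The inductive hypothesis is a \emph{containment} statement, and if $W\subseteq \cW(X)$ is a curve, then $W\cap\Pi$ is a finite set of points; ``a finite set of points is contained in a finite union of codimension-$2$ linear spaces'' is vacuously true, so slicing transfers no information whatsoever. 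Thus your upgrade step (``a variety whose general hyperplane section is a union of linear spaces is itself a union of linear spaces'') never gets off the ground for curve components --- it says nothing against, say, a nondegenerate rational normal curve of non-uniform points. The dichotomy likewise cannot be ``inherited from the inductive step'': the base case $n=1$ is plain finiteness, with no exceptional case to inherit, and when the sliced curve $C$ does fall into the exceptional regime, the inductive conclusion (``union of rational curves'') again evaporates under further slicing. Note also that the exceptional case contaminates the other branch of your induction: even for $\dim W\geq 2$ you must first exclude that the general section $C$ is exceptional before you may invoke the linear-containment conclusion.

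The missing content is precisely what you flag as ``the main obstacle'', and it cannot be routed around with polar varieties and Gauss maps as sketched: the degeneracy you need to exploit (absence of simple tangent lines) holds only for lines through the curve $\sC\subseteq\cW(X)$, not for all tangent lines of $X$, so rank statements about the Gauss map of $X$ (which in characteristic $0$ would indeed force $X$ to be a cone or a developable) simply do not follow. The paper's proof is not an induction at this stage: it takes an irreducible curve $\sC\subseteq\cW(X)$ not contained in a codimension-$2$ linear space, forms the family $\cV_\sC$ of non-simple tangent lines through $\sC$, proves it is a filling family (Lemma \ref{lem:fillingG}), and then counts foci on a general line of the family: each tangency point is a focus, with multiplicity two if asymptotic, and two foci if bitangent (Lemma \ref{lem:focalnewtuttoinsieme}); points of $X^\sing$ on the line are foci (Lemma \ref{lem:fundpoint}); and the point of $\sC$ is a fundamental point contributing multiplicity $n-1$. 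This gives at least $n+1$ foci, contradicting the exact count of $n$ foci per general line from Lemma \ref{lem: degree focal scheme}. The only configuration surviving this count is that of lines lying in tangent cones at points of $X^\sing$ (case (C4)), and there the generalised De Poi lemma (Lemma \ref{lemma:depoi}) --- not an induction --- is what forces $X^\sing$ to be linear, $\sC$ rational and meeting $X^\sing$ in $\deg(\sC)-1$ points, producing the exceptional case of the statement. Your proposal contains no substitute for this focal machinery (or any equivalent), so it has a genuine gap at the theorem's core.
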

Cones over planar curves which admit non--uniform points give examples of hypersurfaces $X$ for which $\cW(X)$ has codimension $2$ (see Example \ref{ex:codim2}). The possibility of $\cW(X)$ being a union of rational curves seems unlikely, and its discussed in Remark \ref{rem:types}. As a consequence of the main result, we give in Proposition \ref{prop:smooth} a bound on the dimension of the locus of non-uniform points for smooth varieties; in particular, such locus is finite for smooth hypersurfaces.

\begin{thm}\label{thm:smooth}
Let $X$ be a smooth, complex projective hypersurface of dimension $n$ in $\bP^{n+1}$. Then the locus of non--uniform points is finite.
\end{thm}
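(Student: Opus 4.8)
The plan is to combine the structural information of Theorem~\ref{thm:main} with a dimension reduction by general linear sections, whose base is the known finiteness for smooth surfaces in $\bP^3$. I would first record the group-theoretic reduction. Since $X$ is irreducible, $M(\pi_P)$ is transitive for every $P\notin X$, so by Jordan's theorem (a primitive permutation group containing a transposition is the full symmetric group) a point is uniform exactly when $M(\pi_P)$ is primitive and contains a transposition; hence $\cW(X)=\cD(X)\cup\cT(X)$, where $\cD(X)$ is the decomposable (imprimitive) locus and $\cT(X)$ collects the points whose projection has no simple branch point. Because $X$ is smooth, $X^\sing=\emptyset$, so the exceptional alternative in Theorem~\ref{thm:main} cannot occur and we obtain $\cW(X)\subseteq\bigcup_i\Lambda_i$ with each $\Lambda_i$ a linear space of codimension $2$. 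In particular every positive-dimensional component lies in one $\Lambda_i\cong\bP^{n-1}$, and it suffices to prove that $\cW(X)$ is $0$-dimensional.

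Next I would set up an induction on $n$ via the monodromy comparison for general linear sections that already underlies the extension of the Uniform Position Principle. For a general hyperplane $H\cong\bP^n$ the section $Y=X\cap H$ is again smooth and irreducible by Bertini, and for a point $P\in H$ the Zariski--Lefschetz theorem on fundamental groups of complements of hypersurfaces gives $M(\pi_P)\cong M(\pi^Y_P)$, since both are computed from the same general plane section $X\cap\Pi$ with $P\in\Pi\subseteq H$. Consequently $\cW(X)\cap H\subseteq\cW(Y)$ wherever this genericity holds, so $\dim\cW(X)-1\le\dim\cW(Y)$ for general $H$. Assuming by induction that $\cW$ is finite for smooth hypersurfaces of dimension $n-1$ (the base case $n-1=2$ being the finiteness for smooth surfaces in $\bP^3$ recalled in the introduction), the comparison forces $\dim\cW(X)\le 1$, and the statement reduces to excluding a one-dimensional component, which will also close the induction.

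The main obstacle is precisely this last step, since a one-dimensional component $\Gamma\subseteq\cW(X)$ survives every general hyperplane section (it meets $H$ in finitely many points) and hence escapes the inductive mechanism. Here I would exploit the linearity furnished by Theorem~\ref{thm:main}: such a $\Gamma$ must lie in a codimension-$2$ linear space $\Lambda\cong\bP^{n-1}$. I would then argue that a non-uniform locus sweeping out a positive-dimensional part of a \emph{linear} space $\Lambda$ imposes a tangential degeneracy of $X$ along $\Lambda$ --- concretely, that the lines joining $\Lambda$ to $X$ organise $X$ as a cone-like variety whose vertex meets $\Lambda$ --- and that this vertex is necessarily a singular point of $X$, contradicting smoothness; consequently no such $\Gamma$ exists and $\cW(X)$ is finite. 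The delicate point, and the one demanding the finer geometry rather than a formal section argument, is to convert "$\Lambda$ positive-dimensionally contained in $\cW(X)$" into a genuine singular point rather than a merely tangentially special smooth point. That smoothness is exactly the right hypothesis is suggested by the cone examples of Example~\ref{ex:codim2}, which realise the codimension-$2$ behaviour and are all singular.
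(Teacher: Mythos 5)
Your reduction steps are mostly sound, but the proof has a genuine gap exactly at the step you yourself flag as ``delicate'', and that step is where all the real work lies. Granting Theorem~\ref{thm:main} (the exceptional alternative indeed cannot occur for smooth $X$, since $X^\sing=\emptyset$) and granting the sectioning induction (for which the paper's Lemma~\ref{lem:section} already gives the needed containment $\cW(X)\cap H\subseteq\cW(X\cap H)$ --- your appeal to a Zariski--Lefschetz \emph{isomorphism} of monodromy groups is stronger than necessary and not justified as stated), you have only reduced the theorem to: no irreducible curve $\Gamma\subseteq\cW(X)$ can exist. For this you offer a heuristic --- that the lines joining the codimension-$2$ linear span $\Lambda$ of $\Gamma$ to $X$ would organise $X$ as a ``cone-like variety whose vertex meets $\Lambda$'', and that such a vertex must be a singular point --- but no argument. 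This is not a proof, and it is not clear it can be completed along those lines: positive-dimensionality of $\cW(X)$ inside $\Lambda$ does not by itself produce a vertex, and smooth hypersurfaces can be tangentially very degenerate along linear sections without being cones.

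The paper closes this step (in Proposition~\ref{prop:genproj}, of which Theorem~\ref{thm:smooth} is the case $c=1$) by a quite different mechanism. Let $K$ be the \emph{smallest} linear subspace containing the curve $\Gamma$, and consider the linear system of hyperplanes $H\supset K$ of one dimension more. If the general such section $X\cap H$ were irreducible and reduced, then $\Gamma\subseteq\cW(X\cap H)$ would span a codimension-$1$ linear space inside $H$, contradicting Theorem~\ref{thm:main} applied to $X\cap H$; hence the general section must be reducible. Then Lemma~\ref{Bertini2} (Bertini for families with a section) forces the base locus $X\cap K$ of this linear system to consist of singular points of the total space of the family; since $X$ itself is smooth, this means $K$ is tangent to $X$ along all of $X\cap K$, and this tangential degeneracy is ruled out by the Mather-type statement, Theorem~\ref{thm:Mather}. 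So the conversion of ``$\Gamma\subseteq\cW(X)$ positive-dimensional'' into a contradiction with smoothness goes through reducibility of linear sections and Bertini with base points, not through exhibiting a cone vertex. You would need to supply an argument of this kind (or an alternative one) before your proposal counts as a proof; also note, as a minor point, that your opening decomposition $\cW(X)=\cD(X)\cup\mathcal{T}(X)$ conflates ``no transposition in the monodromy'' with ``no simple branch point'' (only one implication is standard) --- though since that decomposition is never used later, it does not affect the rest.
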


The primary tool in the proofs is the theory of focal loci of families of lines in $\bP^{n+1}$, a classical topic dating back to Segre (\cite{Segre}). For our specific problem, we prove a generalisation of \cite[Proposition 4.3]{CF} and of \cite[Lemma 2]{DePoi}, described in Lemma \ref{lem:focalnewtuttoinsieme} and Lemma \ref{lemma:depoi}, respectively. 

We also use the focal machinery to give a property of the monodromy group when $\cW(X)$ is infinite and $X$ is not a cone. 

\begin{thm} \label{thm:transpositionnotcone}
If $\dim \cW(X)>0$ and $X$ is not a cone, then the monodromy group associated with all but finitely many points of $\cW(X)$ contains transpositions.
\end{thm}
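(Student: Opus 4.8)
The plan is to translate the group-theoretic hypothesis into a contact condition and then to contradict it via focal loci. Recall that $M(\pi_P)$ is generated by the local monodromies around the irreducible components of the branch divisor $B_P\subset\bP^n$ of $\pi_P$, and that a component over which $\pi_P$ is \emph{simply} ramified---two sheets meeting transversally in an ordinary fold---contributes a transposition. Conversely, if $M(\pi_P)$ contains no transposition, then no component of $B_P$ is simply ramified; since the ramification correspondence is generically one-to-one over each component of $B_P$, the fibre of $\pi_P$ over a general point of $B_P$ must acquire a single point of multiplicity at least $3$. Equivalently, for a general point $x$ of the ramification divisor $R_P=\{x\in X_{\sm}:P\in T_xX\}$, the line $\overline{Px}$ meets $X$ at $x$ with contact of order at least $3$. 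Thus it suffices to prove that the locus $N\subseteq\cW(X)$ of points whose monodromy contains no transposition is finite; I would assume instead that $N$ contains an irreducible curve $Z$ and deduce that $X$ is a cone.

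For a general $P\in Z$ and a general $x\in R_P$ the discussion above gives contact of order at least $3$ of $\overline{Px}$ with $X$ at $x$. I would then organise these lines into a single family: let $\Lambda\subset Z\times X$ be the closure of the ramification correspondence $\{(P,x):P\in Z,\, x\in R_P\}$, with its two projections, and let $\cL$ be the associated family of lines $\ell_{P,x}=\overline{Px}$ in $\bP^{n+1}$, of dimension $\dim Z+(n-1)$. The point here is that the contact condition has a focal interpretation: tangency of $\ell_{P,x}$ to $X$ at $x$ already makes $x$ a focus of $\cL$ on the line $\ell_{P,x}$, and the \emph{higher} contact of order at least $3$ forces $x$ to be a focus of multiplicity at least $2$, i.e. a non-reduced point of the focal scheme cut out on $\ell_{P,x}$.

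The core of the argument is then to play this forced multiplicity against the degree of the focal divisor. Here I would invoke the generalisations of \cite[Proposition 4.3]{CF} and \cite[Lemma 2]{DePoi} recorded in Lemma~\ref{lem:focalnewtuttoinsieme} and Lemma~\ref{lemma:depoi}: they bound, on a general line of $\cL$, the length of the first-order focal scheme and describe precisely when that bound is attained. The everywhere non-reduced behaviour imposed by contact $\ge 3$ saturates this bound along every line of the family, and the rigidity part of the focal lemmas then forces the focal scheme to be supported, for all members simultaneously, at one fixed point $V\in\bP^{n+1}$ through which every line $\ell_{P,x}$ passes. A family of lines sweeping out $X$ and all passing through a common point exhibits $X$ as a cone with vertex $V$, contradicting the hypothesis. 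Hence $N$ is finite, which is exactly the assertion that all but finitely many points of $\cW(X)$ have monodromy containing a transposition.

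The hard part is the focal bookkeeping of the previous paragraph: making precise the equivalence \emph{contact order $m$ at $x$} $\Longleftrightarrow$ \emph{focus of multiplicity $m-1$ at $x$}, computing the correct bound on the length of the focal divisor for a family of dimension $\dim Z+(n-1)$, and checking that saturation of the bound genuinely yields a single common vertex rather than merely a positive-dimensional linear locus of foci. This last point is where the exclusion of the degenerate case in the statement enters, and it must be handled in tandem with the analysis of singular contours and of the configurations in which $X^{\sing}$ is linear in codimension $1$; a careful reading of Lemma~\ref{lem:focalnewtuttoinsieme} should show these are precisely the situations in which the clean collapse to a vertex can fail.
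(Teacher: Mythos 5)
Your proposal follows the paper's broad strategy (focal loci of a family of tangent lines through a curve in $\cW(X)$), but it has three genuine gaps, two of them fatal as written. The first is the opening reduction: from ``$M(\pi_P)$ contains no transposition'' you conclude that the line $\overline{Px}$ has contact of order at least $3$ with $X$ at a general ramification point. This is false, because your claim that the ramification correspondence is generically one-to-one over each component of $B_P$ fails for bitangent lines: a branch component whose general fibre has two distinct points of multiplicity $2$ has local monodromy a product of two disjoint transpositions --- not a transposition --- while every contact order is exactly $2$; lines meeting $X^\sing$ are a further omitted case. The correct dichotomy, and the one the paper uses, is ``simply tangent'' versus ``not simply tangent,'' and the focal lemmas handle all failure modes at once: an asymptotic tangency is a focus of multiplicity at least $2$ (Lemma \ref{lem:focalnewtuttoinsieme}), a bitangency gives two tangency foci, and points of $X^\sing$ are fundamental, hence focal (Lemma \ref{lem:fundpoint}). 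The second gap: you never prove that your family $\cL$ is filling, i.e.\ that it dominates $\bP^{n+1}$; the dimension count $\dim Z+(n-1)=n$ is necessary but not sufficient, and Lemma \ref{lem: degree focal scheme} applies only to filling families. This is exactly where the hypothesis that $X$ is not a cone must enter: the paper establishes dominance through the dual variety $X^*$, which is nondegenerate precisely because $X$ is not a cone, together with a count of tangent hyperplanes through a general point of the curve.

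The third gap is your endgame. The assertion that saturation of the focal bound ``forces the focal scheme to be supported at one fixed point $V$ through which every line passes,'' whence $X$ is a cone, is not a consequence of any lemma you cite: Lemma \ref{lem: degree focal scheme} only says the focal scheme on a general line of a filling family has length $n$, and Lemma \ref{lemma:depoi} concerns families of lines joining a curve to a codimension-$2$ subvariety, which is not your situation. No such rigidity statement is needed. Once filling-ness is in place, the contradiction is purely numerical: each $P \in Z$ carries an $(n-1)$-dimensional subfamily of lines of $\cL$, so $P$ itself is a focus of multiplicity $n-1$ on every line through it (Lemma \ref{lem:fundpoint}); if the general line were not simply tangent, the tangency points would contribute at least $2$ further foci, for a total of at least $n+1>n$, contradicting Lemma \ref{lem: degree focal scheme}. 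Hence the general line through a general point of the curve is simply tangent, and simple tangencies give transpositions (Lemma \ref{lem:permutations}). Since you yourself flag the rigidity step as unverified, the proposal is incomplete exactly at its decisive point; repaired along the lines above, it collapses onto the paper's proof.
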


As a consequence of this result, we can give a characterisation of the two loci $\cG(X)$ and $\cD(X)$. Galois points have been introduced and extensively studied in various works, for instance \cite{Yoshiara, FT, FukCom, FukAut} to name a few. These works were particularly focused on computing the number of Galois points. Yoshihara gives many examples of smooth hypersurfaces $X$ with non-empty $\cG(X)$ in \cite[Proposition 11]{Yoshiara} (see also \cite{FT} for more general examples in case of normal hypersurfaces). We give a generalisation of a result of Fukasawa and Takahashi (\cite[Proposition 6]{FT}).

\begin{prop} \label{prop:analogogalois}
Let $X$ be an irreducible, reduced hypersurface in $\bP^{n+1}$ of degree $d \geq 3$. Then $\cG(X)$ is finite unless $X$ is a cone.
\end{prop}

Decomposable maps were studied in many different contexts as in \cite{BastianelliCortiniDePoi, BCFS}. A consequence of Theorem \ref{thm:transpositionnotcone} is that the case $\cW(X)$ of positive dimension when $X$ is not a cone depends only on the locus $\cD(X)$.

\begin{prop}\label{Prop:decomposablenoncone}
Let $X$ be an irreducible, reduced hypersurface in $\bP^{n+1}$ which is not a cone. Then $\cW(X) \smallsetminus \cD(X)$ is finite. 
\end{prop}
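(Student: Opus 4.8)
The plan is to combine Theorem \ref{thm:transpositionnotcone} with the classical dictionary between the combinatorics of the monodromy group and the factorisations of $\pi_P$. Since $X$ is irreducible, $M(\pi_P)$ acts transitively on the $d$ points of a general fibre of $\pi_P$, and blocks of imprimitivity for this action correspond, via the Galois correspondence between intermediate fields of $K(X)/K(\bP^n)$ and the subgroups lying between a point stabiliser and $M(\pi_P)$, to intermediate covers $X \to Y \to \bP^n$ with both maps of degree greater than $1$. Hence I would first record the equivalence that $\pi_P$ is decomposable precisely when $M(\pi_P)$ is imprimitive; equivalently, $P \in \cW(X) \smallsetminus \cD(X)$ forces $M(\pi_P)$ to be primitive.

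Next I would dispose of the trivial case: if $\dim \cW(X) = 0$ then $\cW(X)$ is finite (being constructible of dimension zero) and the statement is immediate, so I may assume $\dim \cW(X) > 0$. Since $X$ is not a cone, Theorem \ref{thm:transpositionnotcone} then provides a finite set $F \subset \cW(X)$ such that $M(\pi_P)$ contains a transposition for every $P \in \cW(X) \smallsetminus F$. I would show $\cW(X) \smallsetminus \cD(X) \subseteq F$, which gives the desired finiteness. Indeed, for $P \in \cW(X) \smallsetminus \cD(X)$ with $P \notin F$, the group $M(\pi_P)$ is primitive by the dictionary above and contains a transposition. The key group-theoretic input is the classical fact that a primitive subgroup of $S_d$ containing a transposition is all of $S_d$: the relation $i \sim j \iff i = j$ or $(i\,j) \in M(\pi_P)$ is an $M(\pi_P)$-invariant equivalence relation, since transitivity follows from $(i\,k) = (i\,j)(j\,k)(i\,j)$ and invariance from $g(i\,j)g^{-1} = (g(i)\,g(j))$; by primitivity it is trivial, and being distinct from the equality relation it must be the full relation, so every transposition lies in $M(\pi_P)$ and $M(\pi_P) \cong S_d$. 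This contradicts the assumption that $P$ is non-uniform, so no such $P$ exists outside $F$.

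The main conceptual step is Theorem \ref{thm:transpositionnotcone} itself, already established through the focal-loci machinery, which guarantees the generic presence of transpositions; once this is granted, the remainder is a short formal deduction. The only point requiring mild care is the identification of decomposability with imprimitivity, which rests on the standard correspondence between intermediate fields of the function-field extension and block systems of the monodromy action; this is purely formal and I do not expect it to present a genuine obstacle.
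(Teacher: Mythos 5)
Your proposal is correct and takes essentially the same route as the paper: both reduce the statement to Theorem \ref{thm:transpositionnotcone} together with the fact that a non-uniform point whose monodromy group contains a transposition must be decomposable (equivalently, an indecomposable, i.e.\ primitive, monodromy group containing a transposition is all of $S_d$). The only difference is that the paper cites this group-theoretic fact from \cite[Remark 2.2]{PS}, whereas you prove it inline via the invariant-equivalence-relation argument and the Galois-correspondence dictionary between decomposability and imprimitivity.
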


Notice that for all hypersurfaces $X$ having prime degree, the locus $\cD(X)$ is empty. These results serve as evidence of the following:

\begin{conj} \label{conj:coni}
Let $X$ be an irreducible, reduced hypersurface of $\ \bP^{n+1}$, $n\geq 1$. Then the locus $\cW(X)$ is finite unless $X$ is a cone.
\end{conj}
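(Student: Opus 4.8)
Since this is stated as a conjecture, the plan is to outline how the results proved above could be assembled into a proof, and to isolate the step that genuinely obstructs it. The first move is to reduce the problem entirely to the decomposable locus. By Proposition \ref{Prop:decomposablenoncone}, if $X$ is not a cone then $\cW(X)\smallsetminus\cD(X)$ is finite, so $\dim\cW(X)>0$ forces $\dim\cD(X)>0$. Hence the conjecture is equivalent to the assertion that a non-cone, irreducible, reduced hypersurface cannot carry a positive-dimensional family of decomposable points. Because $\cD(X)=\emptyset$ whenever $\deg X$ is prime, one may assume $d$ is composite, and the whole question becomes: can a moving center of projection admit, throughout a positive-dimensional family, a nontrivial factorization $\pi_P\colon X\to Y\to\bP^n$ with $1<\deg(X\to Y)<d$, without $X$ being a cone?

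Next I would localize the putative positive-dimensional component $Z\subseteq\cD(X)$ using Theorem \ref{thm:main}. Either $Z$ is contained in a finite union of codimension-$2$ linear spaces, or $X^\sing$ is a union of at least two copies of $\bP^{n-1}$ and $Z$ is a union of rational curves lying in the intersection of the tangent cones along the singular components. In the linear regime I would pick a general $P\in Z$ and exploit Theorem \ref{thm:transpositionnotcone}: the generic monodromy contains a transposition. Since a decomposable $\pi_P$ makes the monodromy imprimitive with blocks the fibres of $X\to Y$, and a transposition can only lie in such a group by swapping two points of a single block, the branching producing it must occur in the first factor $X\to Y$ while $Y\to\bP^n$ stays étale there. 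Propagating this constraint as $P$ varies over $Z$, I would feed the associated one-parameter family of projection lines through the points of $Z$ into the focal machinery of Lemma \ref{lem:focalnewtuttoinsieme} and Lemma \ref{lemma:depoi}: the focal scheme of such a family is supported on $X$, and I would argue that compatibility of this focal locus with a fixed decomposition type throughout the family forces the lines to share a common point, i.e. forces $X$ to be a cone.

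The remaining regime — $X^\sing=\bP^{n-1}_1\cup\dots\cup\bP^{n-1}_k$ with $k\geq 2$ and $\cW(X)$ a union of rational curves — is exactly the configuration the authors flag as unresolved, and it is where I expect the real obstacle to lie. Here the focal argument does not directly yield a vertex, so the attack would be more geometric: study the tangent cones $\mathrm{TC}_pX$ at points $p$ in distinct components $\bP^{n-1}_i$, show that a candidate rational curve forced to lie in their intersection is severely constrained, and attempt to contradict the irreducibility and reducedness of $X$ by a multiplicity count along the linear singular strata together with the restrictions imposed by having several codimension-one linear singular loci.

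The main difficulty, and the reason this remains a conjecture rather than a theorem, is precisely the passage from ``contained in a finite union of codimension-$2$ linear spaces, or in a union of rational curves'' to ``finite'': the focal estimates control the geometry of the projection lines but do not by themselves collapse a positive-dimensional decomposable family to a point on a non-cone. I would expect the gap to close only via a sharper focal inequality that detects decomposability directly, or via a dedicated birational analysis of the factorization $X\to Y\to\bP^n$ showing that a moving center with constant decomposition type must propagate a vertex. That the cone hypothesis is indispensable at this last step is confirmed by the cones over planar curves of Example \ref{ex:codim2}, where $\cW(X)$ genuinely has codimension $2$; any successful proof must therefore invoke the non-cone assumption exactly at the point where the codimension-$2$ bound of Theorem \ref{thm:main} is improved to finiteness.
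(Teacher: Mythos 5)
You are right not to present this as a proof: in the paper this statement is a \emph{conjecture}, and the paper contains no proof of it, so there is nothing to compare your argument against except the paper's own assembled evidence. That evidence is essentially what you collect: the reduction of the non-cone, infinite-$\cW(X)$ case to the decomposable locus via Proposition \ref{Prop:decomposablenoncone} (itself a consequence of Theorem \ref{thm:transpositionnotcone}), the remark that prime degree kills $\cD(X)$, and the dichotomy of Theorem \ref{thm:main}. The paper's own summary of the state of the art is Remark \ref{rem:types}: $\cW(X)$ is finite whenever $X$ is neither of Type-1 (the tangent-cone/rational-curve configuration coming from case (C4)) nor of Type-2 (hypersurfaces containing a $\bP^k$ such that every $\bP^{k+1}\supset\bP^k$ cuts a reducible section). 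Up to this point your outline and the paper agree.

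The genuine gap is in your treatment of what you call the linear regime. You assert that when a positive-dimensional component of $\cD(X)$ lies in a codimension-$2$ linear space, the imprimitive block structure of the monodromy together with the focal lemmas should ``force the lines to share a common point,'' i.e.\ force $X$ to be a cone. No such argument exists in the paper, and this regime is exactly the open Type-2 class: it is the situation in which the Bertini-type argument (Lemma \ref{lem:section}, as deployed in Proposition \ref{prop:genproj}) breaks down because the sections through the linear space are reducible, and the focal counting gives no contradiction there. Your side remark that the focal scheme of the relevant family is ``supported on $X$'' is also inaccurate: the points of the curve $\sC\subset\cW(X)$ lie \emph{off} $X$ and are fundamental, hence focal of multiplicity $n-1$ by Lemma \ref{lem:fundpoint} --- this is precisely the ingredient that makes the counting in Theorem \ref{thm:main} work, and it does not distinguish decomposable from indecomposable centers. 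So the conjecture has two independent open obstructions, not one: Type-1, which you correctly single out, and Type-2, which your outline claims (without an argument) to be within reach of the existing tools. Converting ``constant decomposition type along a moving center'' into ``common vertex of all projection lines'' is the missing idea, not a routine application of Lemmas \ref{lem:focalnewtuttoinsieme} and \ref{lemma:depoi}; Example \ref{ex:codim2} only shows the non-cone hypothesis is necessary, not where or how it must be used.
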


\medskip

\textbf{Plan of the paper.} In Section \ref{sec:preliminaries}, we recall some basic definitions which will be useful in the following. The theory of focal loci, which is the main technical tool used in this paper, is introduced in Section \ref{Fuochi}. The main results in this context are Lemma \ref{lem:focalnewtuttoinsieme} and Lemma \ref{lemma:depoi}. The first part of Section \ref{section:nonuniform} is devoted to the proof of Theorem \ref{thm:main} about non--uniform points, followed by the consequences concerning general projections of smooth varieties. The second part contains the study of families of simply tangent lines passing through $\cW(X)$ (see Theorem \ref{thm:transpositionnotcone}) and their relationship with $\cG(X)$ and $\cD(X)$. We conclude the paper by summarising our results in light of Conjecture \ref{conj:coni}, this is done in Remark \ref{rem:types}.
\medskip

\textbf{Notation.}
From now on, the varieties are assumed to be complex and projective. Let $\sF$ be a family of objects parametrised by a scheme $V$. We say the general element of $\sF$ satisfies a certain property $P$ if $P$ holds for every element in a Zariski dense open subset of $V$. 
We will use the notation $\bG(r, \bP^n)$ for the Grassmannian parametrising linear spaces of projective dimension $r$ contained in $\bP^n$. We denote by $X^\sing$ the singular locus of a variety $X$ and by $X^\sm$ its complement.

\section{Preliminaries} \label{sec:preliminaries}

\subsection{Projections}
In this section we will briefly recall some results useful for various proofs in Section \ref{section:nonuniform}.

\subsection{Monodromy}\label{subsec:monodromy}
We can define the monodromy group of a finite dominant morphism $f:X \to Y$ of degree $d>1$ between complex irreducible reduced hypersurfaces in $\bP^{n+1}$ as follows. 
Let $U \subset Y$ be a Zariski open set over which 
$f$ is \'etale, and let $y$ denote a point in $U$.
We have a well defined map
$$\mu: \pi_1(U,y) \to \aut\big(f^{-1}(y)\big).$$
The image $M(f):=\mu\left(\pi_1(U,y)\right)$ is called \emph{monodromy group} of the map $f$; it is a transitive subgroup of $\aut(f^{-1}(y))\simeq S_d$.

We can also describe this group by means of Galois extensions: let $K$ be the Galois closure of the extension $k(X)/k(Y)$, where $k(X),k(Y)$ define the fields of rational functions of $X$ and $Y$, respectively. Define the \emph{Galois group} $G(f)$ of the map $f$ to be the Galois group of the field extension $K/k(Y)$.
It turns out that $G(f)$ is isomorphic to $M(f)$, see \cite[Section I]{H}. It follows that $M(f)$ does not depend on the choices of $U$ and $y$.

\subsection{Projections}
Let $X \subset \bP^{n+c}$ be a smooth variety of dimension $n$. Let $T$ be a linear subspace of dimension $t\leq c-1$ such that $T \cap X= \emptyset$. Consider the finite map defined by the natural linear projection $\pi_T: X \to \bP^n$. The following theorem is the algebraic version of a result of Mather in \cite{Mather}, which provides a powerful tool in order to understand singularities arising from projections. We will apply it in Proposition \ref{prop:genproj} and \ref{prop:smooth}, for $t=c-2$ and $t=c-1$, respectively.

\begin{thm}\cite[Theorem 1]{abo} \label{thm:Mather}
Let $X$ and $T$ as above. For any $i_1 \leq t+1$, define $X_{i_1}:=\{x \in X\ |\ \dim(T_xX \cap T)=i_1-1\}$. When $X_{i_1}$ is smooth, define $X_{i_1,i_2}:=\{x \in X_{i_1}\ |\ \dim(T_xX_{i_1} \cap T)=i_2-1\}$ and so on. When possible, define $X_{i_1,\ldots,i_k}$ for $i_k \leq \ldots\leq i_2 \leq i_1$. For $T$ general, every $X_{i_1,\ldots,i_k}$ is smooth and, when not empty, its codimension is positive, see \cite[Theorem 2]{ao} for details.
\end{thm}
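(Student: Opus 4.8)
The plan is to realise each stratum $X_{i_1,\dots,i_k}$ as a fibre of a suitable incidence correspondence over the Grassmannian of centres, and then to deduce smoothness and positive codimension from generic smoothness in characteristic zero. Write $G=\bG(t,\bP^{n+c})$ for the parameter space of the centres $T$, and let $U\subset G$ be the open locus of those $T$ with $T\cap X=\emptyset$; this is nonempty precisely because $t\le c-1$. The role of generality in the statement is encoded by restricting attention to a general point of $U$.

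First I would treat the first stratum. Consider
$$\Sigma^{i_1}=\{(x,T)\in X\times U \ :\ \dim(T_xX\cap T)=i_1-1\}$$
and analyse the projection $p\colon\Sigma^{i_1}\to X$. Its fibre over $x$ is the set of $t$-planes meeting the fixed $n$-plane $T_xX$ in a linear space of dimension exactly $i_1-1$, which is a Schubert cell in $G$, hence smooth and irreducible of a dimension $\delta(i_1)$ computable by Schubert calculus. Since $X$ is smooth and these fibres are smooth of constant dimension, $\Sigma^{i_1}$ is smooth of dimension $n+\delta(i_1)$. Now comes the excess-intersection count: as $T_xX$ has dimension $n$, $T$ has dimension $t$, and $t-c\le -1$, the expected value of $\dim(T_xX\cap T)$ in $\bP^{n+c}$ is negative, so the generic intersection is empty. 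Hence for $i_1\ge 1$ the condition $\dim(T_xX\cap T)=i_1-1\ge 0$ cuts out a proper Schubert locus, giving $\delta(i_1)<\dim G$. Applying generic smoothness (Sard's theorem over $\bC$) to the second projection $\Sigma^{i_1}\to U$ shows that for $T$ general the fibre $X_{i_1}$ is smooth, and the inequality $\delta(i_1)<\dim G$ yields $\dim X_{i_1}=n+\delta(i_1)-\dim G<n$, i.e. positive codimension.

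For the higher strata I would iterate this construction, replacing $T_xX$ by $T_xX_{i_1,\dots,i_{j}}$ at each step: on the open set where the previous stratum is smooth, its tangent space varies algebraically with both $x$ and $T$, so one again obtains an incidence variety whose fibre over a point is a Schubert locus of $t$-planes with prescribed contact, and generic smoothness together with the same excess count propagates smoothness and positive codimension to the next level. The main obstacle — and the reason this statement is delicate enough to be isolated and cited rather than proved by a one-line Schubert computation — is precisely this inductive control of the tangent spaces $T_xX_{i_1,\dots,i_j}$: they encode second- and higher-order data (osculating spaces, second fundamental forms) of $X$ relative to the projection, so verifying that the successive incidence correspondences are smooth of the expected dimension requires a jet-transversality, Thom--Boardman type argument rather than a single relative Schubert calculation. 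Carrying this through, with the generality of $T$ doing the work at every jet level, is exactly the technical content of \cite{abo,ao}.
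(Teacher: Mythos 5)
The paper does not prove this statement at all: it is imported verbatim from \cite[Theorem 1]{abo}, with the proof deferred to \cite[Theorem 2]{ao}, so there is no internal argument to compare yours against, and your proposal has to stand on its own. For the first stratum it does: your incidence-correspondence argument for $X_{i_1}$ is correct and is the standard one. One small repair is needed in the justification of smoothness of $\Sigma^{i_1}$ --- ``smooth fibres of constant dimension over a smooth base'' does not by itself imply the total space is smooth. The clean way to say it is that $\Sigma^{i_1}$ is the pullback, along the Gauss map $x \mapsto T_xX \in \bG(n,\bP^{n+c})$, of the incidence variety $\{(A,T)\ :\ \dim(A\cap T)=i_1-1\}$, which is a single $\mathrm{PGL}_{n+c+1}$-orbit; hence $\Sigma^{i_1}\to X$ is a smooth morphism of relative dimension $\delta(i_1)$ and $\Sigma^{i_1}$ is smooth of pure dimension $n+\delta(i_1)$. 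Your excess count $\delta(i_1)<\dim G$ (because $t<c$ forces a general $t$-plane to miss a fixed $n$-plane) and the application of generic smoothness and the fibre-dimension theorem are then correct, giving smoothness and positive codimension of $X_{i_1}$ for general $T$.

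The genuine gap is the inductive step, and it is larger than your hedging suggests. For $k\geq 2$ the stratum $X_{i_1}$, and hence the tangent spaces $T_xX_{i_1}$, depend on $T$ itself; the second-level locus $\{(x,T)\ :\ x\in X_{i_1}(T),\ \dim\left(T_xX_{i_1}(T)\cap T\right)=i_2-1\}$ therefore does not fibre over any fixed variety with Schubert-cell fibres. The fibre of the projection to $X$ over a point $x$ is cut out by conditions in which $T$ appears twice --- as the plane being intersected and inside the definition of the space it intersects --- and the latter dependence is of second order (it involves the second fundamental form of $X$). So your sentence ``one again obtains an incidence variety whose fibre over a point is a Schubert locus of $t$-planes with prescribed contact'' is false as stated, and no dimension count or generic-smoothness argument is given for the object that actually arises. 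Your closing paragraph concedes that a jet-transversality, Thom--Boardman type argument is what is really required, but conceding the difficulty does not discharge it: as written, the proposal proves the theorem only for $k=1$ and, exactly like the paper, defers the general case to \cite{abo} and \cite{ao}.
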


We recall here the definition of projective cone and the statement of Bertini's Theorem. 

\begin{defn}\label{defn:cone}\cite[Ex 3.1]{harrisAG}
Let $\Gamma \cong \bP^k $ be a linear subspace in $\bP^{n+1}$ and let $Y \subset \Gamma$ be a variety. Let $V\cong \bP^{n-k}$ be a linear subspace disjoint from $\Gamma$. The \emph{cone} over $Y$ of vertex $V$ is a variety in $\bP^{n+1}$ defined as the union of the lines joining the vertex $V$ with points of $Y$. 
\end{defn}

The notion of cone will be used in Lemma \ref{lemma:depoi} and in Theorem \ref{thm:transpositionnotcone} in conjunction with the notion of dual variety. An introduction to this topic can be found in \cite{ProjDual}.

\begin{thm}\cite[Theorem 3.3.1]{Laz1}\label{Bertini1}
Let $X$ be an irreducible variety and $f: X \to \bP^r$ a morphism. Fix an integer $d < \dim \overline{f(X)}$. If $L \subset \bP^r$ is a general $(r-d)$-plane, then $f^{-1}(L)$ is irreducible.
\end{thm}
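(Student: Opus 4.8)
The plan is to prove this classical statement by the incidence--correspondence method, reducing first to the case of a single hyperplane and then establishing irreducibility of a general fibre. First I would remove the morphism $f$ from the picture as much as possible by replacing $\bP^r$ with the irreducible target $Y := \overline{f(X)}$, of dimension $m := \dim \overline{f(X)} > d$. A general $(r-d)$-plane may be written as $L = H_1 \cap \cdots \cap H_d$ for sequentially general hyperplanes $H_1, \ldots, H_d$, so it suffices to prove the one--hyperplane statement, namely that for a morphism $g : W \to \bP^r$ from an irreducible variety $W$ with $\dim \overline{g(W)} \geq 2$ the preimage $g^{-1}(H)$ is irreducible for general $H$, and then to iterate. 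The bookkeeping is clean: cutting the $k$-th general hyperplane lowers the image dimension from $m-k+1$ to $m-k$ and preserves irreducibility as long as $m-k+1 \geq 2$; over the $d$ successive cuts the binding constraint is the last one, $m-(d-1) \geq 2$, i.e. $m \geq d+1$, which is exactly the hypothesis $d < \dim \overline{f(X)}$.

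For the one--hyperplane case I would form the incidence variety
\[
\Sigma \;=\; \{(w, H) \in W \times (\bP^r)^\vee \;:\; g(w) \in H\}.
\]
The first projection $p : \Sigma \to W$ exhibits $\Sigma$ as a bundle whose fibre over $w$ is the pencil of hyperplanes through $g(w)$, a copy of $\bP^{r-1} \subset (\bP^r)^\vee$, hence irreducible of constant dimension; since $W$ is irreducible, so is $\Sigma$. The second projection $q : \Sigma \to (\bP^r)^\vee$ is dominant, because a general $H$ does not contain $\overline{g(W)}$, and its fibre over $H$ is precisely $g^{-1}(H)$. Thus the theorem reduces to showing that the general fibre of the dominant morphism $q$ between irreducible varieties is irreducible.

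In characteristic $0$ the general fibre of $q$ is irreducible exactly when $k\big((\bP^r)^\vee\big)$ is algebraically closed inside $k(\Sigma)$; otherwise the Stein factorisation yields a nontrivial finite cover $\widetilde{B} \to (\bP^r)^\vee$ through which $q$ factors generically. Ruling out this cover is the crux of the argument and the step I expect to be the main obstacle. Its branch locus is a proper closed subset, so over the complement one obtains a connected étale cover of degree $>1$; to contradict this I would restrict the whole family to a general line $\ell \cong \bP^1 \subset (\bP^r)^\vee$ (a general pencil of hyperplanes) and invoke the Lefschetz--pencil description of the monodromy, whose local generators force the cover to be trivial. This is precisely where the hypothesis $\dim \overline{g(W)} \geq 2$ is indispensable: it guarantees that a general pencil has the expected structure, whereas in image dimension one the monodromy can genuinely fail to be transitive and the conclusion breaks down.
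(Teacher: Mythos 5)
Your first two steps are fine: the reduction to the hyperplane case (with the bookkeeping $m-(d-1)\geq 2$ matching $d<\dim\overline{f(X)}$), and the incidence variety $\Sigma$ with $p:\Sigma\to W$ a $\bP^{r-1}$-bundle, hence $\Sigma$ irreducible. Note for context that the paper itself gives no proof of this statement --- it quotes it from \cite[Theorem 3.3.1]{Laz1} --- and the proof in that reference runs through exactly such an incidence correspondence, so up to this point you are on the standard track. The gap is at what you yourself call the crux. After Stein-factoring $q:\Sigma\to(\bP^r)^\vee$ through a finite cover $\widetilde{B}\to(\bP^r)^\vee$, the plan ``restrict to a general pencil and invoke the Lefschetz--pencil description of the monodromy, whose local generators force the cover to be trivial'' is not an argument. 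An irreducible finite cover of $\bP^1$, or of $(\bP^r)^\vee$, branched over a nonempty divisor is a perfectly possible object; the fundamental group of the complement of the branch divisor is generated by precisely such local loops, and their images generate the (transitive, possibly nontrivial) monodromy of the cover, so ``local generators'' force nothing. Moreover Picard--Lefschetz theory describes monodromy on vanishing cohomology of smooth projective fibres; here $X$ is an arbitrary variety, $f$ an arbitrary morphism, and the relevant action is on connected components of fibres, so that machinery does not apply at all.

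The missing ingredient --- and it is the key idea of the proof in \cite{Laz1}, which the paper signals by quoting \cite[Lemma 3.3.2]{Laz1} as Lemma \ref{Bertini2} --- is a \emph{section}. Restrict the family to the hyperplanes through a general point $y_0=g(w_0)$ of the image, i.e.\ to $y_0^\vee=\{H:\,y_0\in H\}\cong\bP^{r-1}$. Over this sub-family $q$ has the tautological section $H\mapsto(w_0,H)$, and the restricted total space $\Sigma_0=\{(w,H):\,g(w)\in H,\ y_0\in H\}$ is still irreducible: it is cut out of $\Sigma$ by one equation, so every component has dimension $\geq \dim W+r-2$, while the locus $g^{-1}(y_0)\times y_0^\vee$ has dimension $\dim W-m+r-1$, which is smaller exactly when $m\geq 2$. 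This --- not ``the expected structure of a general pencil'' --- is where the hypothesis $\dim\overline{g(W)}\geq 2$ enters. A dominant family over an irreducible base admitting a section forces the finite part of its Stein factorisation to have degree $1$ (that is Lemma \ref{Bertini2}), so $g^{-1}(H)$ is irreducible for general $H\ni y_0$; one then passes from ``general $H$ through a general point of the image'' to ``general $H$'' using that the incidence variety $\{(y,H):\,y\in\overline{g(W)}\cap H\}$ is irreducible and dominates $(\bP^r)^\vee$. If you prefer your pencil language, the same mechanism reads: the axis $A$ of a general pencil has codimension $2$, so $\dim\overline{g(W)}\geq 2$ guarantees $g^{-1}(A)\neq\emptyset$, and any $z\in g^{-1}(A)$ gives a horizontal curve $\{z\}\times\ell$ inside the incidence variety over the pencil mapping isomorphically to $\ell$, which trivialises the Stein factorisation. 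Without some version of this section/base-point device, your proof does not close.
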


When dealing with a projective hypersurface $X$, the map $f$ as defined in Theorem \ref{Bertini1} above is the inclusion $X \hookrightarrow \bP^{n+1}$.  
Moreover, we can fix a point $Q$ in $\bP^{n+1} \smallsetminus X$ and chose a general $(n+1-e)$-plane $L$ (where $e < \dim X$) passing through $Q$. Then the result still holds for $X \cap L$ by following the same lines of the proof and applying the following.

\begin{lem}\label{Bertini2}\cite[Lemma 3.3.2]{Laz1}
Let $p:Y \to S$ be a dominant morphism between irreducible complex varieties. Assume that $p$ admits a section $s:S \to Y$ whose image does not lie in the singular locus of $Y$. Then the fibre $Y_b:=p^{-1}(b)$ is irreducible for any general point $b \in S$.
\end{lem}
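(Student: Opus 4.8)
The plan is to translate the geometric irreducibility statement into an algebraic one about function fields and then use the section to kill any nontrivial algebraic extension. Set $K=k(S)$ and $L=k(Y)$; since $p$ is dominant with $Y,S$ irreducible, $p$ induces an inclusion $K\hookrightarrow L$ of finitely generated field extensions of $\bC$. Let $\tilde K$ be the algebraic closure of $K$ inside $L$, which is a finite extension of $K$ because $L$ is finitely generated over $K$. The standard fact I would invoke is that, over $\bC$, the number of irreducible components of a general fibre $Y_b$ equals $[\tilde K:K]$: geometrically this is the degree of the finite part $S'\to S$ of the Stein factorisation $Y\dashrightarrow S'\to S$, where $S'$ is the normalisation of $S$ in $\tilde K$ and is irreducible precisely because $\tilde K$ is a field. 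Thus the lemma is equivalent to proving $\tilde K=K$.

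Next I would feed in the hypotheses on the section. Let $\eta$ be the generic point of $S$ and $s(\eta)$ the generic point of the image $s(S)$, so that $p$ sends $s(\eta)$ to $\eta$ and the induced local homomorphism identifies $\mathcal O_{S,\eta}=K$ with a subfield of $\mathcal O_{Y,s(\eta)}$. Take any $\alpha\in\tilde K$. Being algebraic over $K$, it is integral over $K$, hence integral over $\mathcal O_{Y,s(\eta)}$; since moreover $\alpha\in L=\mathrm{Frac}\,\mathcal O_{Y,s(\eta)}$ and $Y$ is smooth — in particular normal — along $s(S)$ by the assumption $s(S)\not\subseteq Y^\sing$, integral closedness gives $\alpha\in\mathcal O_{Y,s(\eta)}$. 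Therefore every element of $\tilde K$ is regular at $s(\eta)$, and restriction along the section defines a $K$-algebra homomorphism $s^\#\colon\tilde K\to\mathcal O_{S,\eta}=K$. Because $p\circ s=\mathrm{id}$, this map restricts to the identity on $K$, i.e. it is a retraction of the inclusion $K\hookrightarrow\tilde K$.

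Finally I would conclude purely algebraically: a nonzero ring homomorphism out of a field is injective, so the retraction embeds $\tilde K$ into $K$ as a $K$-vector space and forces $[\tilde K:K]\le 1$, that is $\tilde K=K$; by the first paragraph the general fibre is then irreducible. The step I expect to be the crux is the middle one, since the conclusion genuinely fails without a hypothesis allowing the algebraic functions in $\tilde K$ to be specialised along the section. Here the point is clean: as $s(S)\cong S$ is irreducible and $Y^\sing$ is closed, the assumption $s(S)\not\subseteq Y^\sing$ says exactly that the generic point $s(\eta)$ lies in the normal locus of $Y$, which is precisely what makes integral elements regular at $s(\eta)$ and upgrades the abstract inclusion $K\subseteq\tilde K$ to an honest retraction.
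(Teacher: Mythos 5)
The paper offers no proof of this lemma: it is quoted directly from Lazarsfeld \cite[Lemma 3.3.2]{Laz1}, so there is no internal argument to compare against; your proposal must be judged on its own, and it is correct. The two pillars both hold: (i) in characteristic $0$ the number of irreducible components of the general fibre equals $[\tilde K : K]$, where $\tilde K$ is the algebraic closure of $K=k(S)$ in $L=k(Y)$ (the degree of the finite part of the Stein-type factorisation through the normalisation of $S$ in $\tilde K$); and (ii) since $Y^{\sing}$ is closed and $s(S)$ is irreducible, the hypothesis $s(S)\not\subseteq Y^{\sing}$ is equivalent to regularity, hence normality, of $\mathcal{O}_{Y,s(\eta)}$, which is exactly what lets you move $\tilde K$ into $\mathcal{O}_{Y,s(\eta)}$ and then retract it onto $K$ via $s^{\#}$. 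This is also, in substance, Lazarsfeld's own argument, only phrased geometrically rather than field-theoretically: there one notes that the rational map $Y\dashrightarrow S'$ to the normalisation of $S$ in $\tilde K$ is defined at every normal point of $Y$, hence at the generic point of the section image, so $S'\to S$ acquires a rational section, which is impossible for a finite map of degree $\geq 2$ between irreducible varieties; your ``elements of $\tilde K$ are regular at $s(\eta)$'' is precisely the statement that this rational map is defined there. Your example-free identification of where the hypothesis is genuinely needed (without it, e.g.\ for $y^2=tx^2\to \bA^1_t$ with the section through the singular line, the conclusion fails) is the right crux, and the argument as written has no gaps.
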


For what concerns the monodromy group, we have the following version of Bertini-type theorem.

\begin{lem}\label{lem:section}
Let $X$ be an irreducible variety in $\bP^{n+1}$ of dimension $n\geq 2$. Let $H$ be a general linear subspace of codimension $k \geq 1$, and $X_H$ the section of $X$ cut by $H$. Then, for a point $P\in H$ such that $P \notin X$, we have
\begin{equation*} 
\xymatrix{
X_H \ar[d]_{{\pi_P}_{|X_H}}  \ar@{^{(}->}[r]^i & X \ar[d]^{\pi_P} & \\
\bP^k  \ar@{^{(}->}[r]^i &  \bP^n.
}
\end{equation*}
As a consequence we have 
$$M({\pi_P}_{|X_H}) \leq M(\pi_P).$$
\end{lem}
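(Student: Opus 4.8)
The plan is to realise both monodromy groups as subgroups of the permutation group of one and the same fibre, and then to exhibit the covering computing $M({\pi_P}_{|X_H})$ as the restriction to the linear slice $\pi_P(H)$ of the covering computing $M(\pi_P)$; the inclusion of groups then follows formally from the functoriality of $\pi_1$.

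First I would record the basic geometry. Since $P\notin X$ we also have $P\notin X_H$, so $f:=\pi_P\colon X\to\bP^n$ and $g:={\pi_P}_{|X_H}\colon X_H\to\pi_P(H)$ are finite dominant morphisms, where $\pi_P(H)$ is the linear subspace of $\bP^n$ forming the bottom-left corner of the diagram. Commutativity of the square is immediate, as $g$ is the restriction of $f$ to $X_H\subseteq X$ and the inclusion $X_H\subseteq H$ forces $g(X_H)\subseteq\pi_P(H)$. For $g$ to carry a monodromy group I need $X_H$ irreducible and reduced of dimension $n-k\geq 1$: this is precisely the Bertini statement recorded after Theorem~\ref{Bertini1}, together with Lemma~\ref{Bertini2}, applied to the fixed point $Q=P\in H$ and valid because $H$ is general among the codimension-$k$ subspaces through $P$.

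The heart of the matter is the identification of fibres. For $y\in\pi_P(H)$ let $\ell_y$ be the line joining $P$ to a preimage of $y$; since $P\in H$ and that preimage lies in $H$, the whole line $\ell_y$ is contained in $H$, whence $\ell_y\cap X=\ell_y\cap(X\cap H)=\ell_y\cap X_H$. This gives $f^{-1}(y)=g^{-1}(y)$ for every $y\in\pi_P(H)$; in particular both maps have degree $d=\deg X$ (a general line through $P$ inside the general $H$ meets $X$ in $d$ distinct points). Let $U\subseteq\bP^n$ be a Zariski-open set over which $f$ is \'etale and set $W:=U\cap\pi_P(H)$, a dense open subset of $\pi_P(H)$ which is nonempty because $H$, hence $\pi_P(H)$, is general. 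Taking the union over $y\in W$ of the equalities of fibres shows $f^{-1}(W)=g^{-1}(W)$ as subsets of $X$, with $f$ and $g$ inducing the very same covering map over $W$; in particular $g$ is \'etale over $W$.

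Finally I would assemble the conclusion. Fix a base point $y\in W$ and use the common fibre $F:=f^{-1}(y)=g^{-1}(y)$, so that the two copies of $\aut(F)\cong S_d$ are literally identified. By the independence of the monodromy group from the \'etale open chosen to compute it (Subsection~\ref{subsec:monodromy}), I may compute $M(g)$ over $W$, giving $M(g)=\mu_g\big(\pi_1(W,y)\big)$, where $\mu_f$ and $\mu_g$ denote the monodromy representations of $f$ and $g$. Writing $j\colon\pi_P(H)\hookrightarrow\bP^n$ for the inclusion and $j_*\colon\pi_1(W,y)\to\pi_1(U,y)$ for the induced map, the coincidence of the two coverings over $W$ means exactly that $\mu_g=\mu_f\circ j_*$ on $\pi_1(W,y)$, so that
$$M({\pi_P}_{|X_H})=\mu_g\big(\pi_1(W,y)\big)=\mu_f\big(j_*\pi_1(W,y)\big)\subseteq\mu_f\big(\pi_1(U,y)\big)=M(\pi_P).$$
The main obstacle — and the only point where the genericity of $H$ is truly used — is upgrading the pointwise fibre identification $f^{-1}(y)=g^{-1}(y)$ to an equality of coverings over a nonempty open set on which $f$ is unramified, i.e. guaranteeing $W\neq\emptyset$ and that restricting to the slice introduces no new ramification; once this is in place the group inclusion is purely formal.
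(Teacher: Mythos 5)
Your proof is correct. Note that the paper itself states Lemma \ref{lem:section} without any proof, treating it as a standard Bertini-type fact, so there is no argument of the authors' to compare against; what you have written supplies exactly the justification the paper leaves implicit. The three ingredients you use are the right ones: (i) the observation that for $y\in\pi_P(H)$ the line $\langle P,y\rangle$ lies entirely in $H$ (because it contains two points of the linear space $H$), so that $f^{-1}(y)=g^{-1}(y)$ --- equivalently, $X_H=\pi_P^{-1}(\pi_P(H))$ and the square is Cartesian, which is why restricting to the slice introduces no new ramification and the restricted map is again a covering over $W=U\cap\pi_P(H)$; (ii) the paper's own Bertini statements (Theorem \ref{Bertini1} and Lemma \ref{Bertini2}, in the form for sections through a fixed point $P\notin X$) to guarantee $X_H$ is irreducible and reduced, so that $M({\pi_P}_{|X_H})$ is defined; and (iii) functoriality of $\pi_1$ applied to $j\colon W\hookrightarrow U$, which turns the equality of coverings into the inclusion of monodromy groups. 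Two small points in your favour: you correctly read the bottom-left corner of the diagram as $\pi_P(H)$ (the paper's label $\bP^k$ is inconsistent with ``codimension $k$''; the image has dimension $n-k$), and you correctly pin down the quantifier order actually used in the paper's applications ($H$ general among subspaces \emph{through} $P$), which is what guarantees $W\neq\emptyset$ since then $\pi_P(H)$ is a general linear subspace of $\bP^n$ and cannot lie inside the proper closed set over which $\pi_P$ fails to be \'etale.
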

 
\subsection{Families of tangent lines} \label{sec:ftl}
This section is devoted to the study of families of lines that are tangent to a hypersurface. 
We will briefly go through some preliminaries to highlight the aspects related to our problem; for a more general introduction see for instance \cite{Zak}.  

Consider a reduced, irreducible hypersurface $X \subset \bP^{n+1}$ of degree $d$ and a line $l \nsubseteq X$. The intersection $X \cap l$ consists of a finite number of points $P_1, \ldots, P_k$ counted with multiplicities $m_1, \ldots, m_k$ such that $\sum m_i=d$.\\
We recall some terminology that is useful to characterise the family of lines with respect to the hypersurface.
\begin{defn}
We call the \emph{contact order} of $l$ with $X$ at $P_i$ the number $m_i-1$, and we denote it by $\ord_{P_i}(l \cap X)$. The line $l$ is \emph{transverse} to $X$ at $P_i$ if $\ord_{P_i}(l \cap X) = 0$, and \emph{tangent} to $X$ at $P_i$ if $\ord_{P_i}(l \cap X) \geq 1$. In the case of higher contact order, i.e. $\ord_{P_i}(l \cap X) \geq 2$, we say that the line $l$ is \emph{asymptotic tangent} to $X$ at $P_i$. The line $l$ is called \emph{bitangent} to $X$ at two points $P_i \neq P_j$, if $l$ is tangent to $X$ at both points $P_i,P_j$. We say that $l$ is a \emph{simple tangent} if there is a unique tangent point $P_i\in l\cap X$ with $\ord_{P_i}(l \cap X) = 1$ and $l$ is transverse to $X$ for all the other $P_j\neq P_i$ in $l\cap X$.
\end{defn}

Notice that if we take a singular point in $X$, then all the lines passing through it will be at least simply tangent.

\begin{defn}\cite[Lecture 20]{harrisAG} \label{defn:vartanglines}
Consider an hypersurface $X$ and a point $P \in X$. Choose an affine neighborhood of $P$ where $P$ is the origin. In this neighborhood, $X$ is described by a certain polynomial $f:=f_m+f_{m+1}+ \cdots$, where $f_k$ is homogeneous of degree $k$, and $m$ is the smallest integer such that $f_m$ is not vanishing. The \emph{tangent cone} to $X$ at the point $P$ is the hypersurface described by the polynomial $f_m$.
\end{defn}

\subsection{Branch locus}
Let $X$ be a reduced and irreducible hypersurface in $\bP^{n+1}$ of degree $d$, and $\pi_P:X \to \bP^n$ be the projection from a point $P\notin X$ and let $y$ be a point in $\bP^n$. The fibre over $y$ is defined by the set $\pi_P^{-1}(y)=\{P_1,\ldots,P_k\}$ consisting of $k \leq d$ distinct points. This set corresponds to the set-theoretical intersection of $X$ with the line through $P$ and $y$.
\begin{defn}
We call classical branch locus of $\pi_P$ the locus $\mathcal{B}$ of points $y \in \bP^n$ such that the cardinality of the fibre $\pi_P^{-1}(y)$ is strictly lower than $d$.
\end{defn}

The image via $\pi_P$ of $X^\sing$ is contained in the classical branch locus, since any line passing through $X^\sing$ is tangent to $X$. 
We want to relate the branch locus of $\pi_P$ to the lines in $\bP^{n+1}$ for which the tangency order is greater than the order of the general line in $\bP^{n+1}$.
To this end, consider the normalisation map $\nu:\tilde{X} \to X$, and its composition $\phi$ with the projection $\pi_P$:
\begin{equation*}
    \phi=\pi_P\circ \nu:\tilde{X}\rightarrow \bP^n.
\end{equation*}
Let $y$ be a general point of an irreducible component of $\mathcal{B}$ and let $l$ be the $0$ dimensional subscheme of $X$ obtained by cutting $X$ with the line $\langle P, y \rangle$. Thus, its pullback to $\tilde{X}$ is given by $\tilde{l}= m_1x_1+\ldots+m_tx_t$, where $d \geq t \geq k$ and $\sum_{i=1}^{t}m_i=d$. 
The image of the singular locus of $\tilde{X}$ via $\phi$ is in codimension $2$ in $\bP^n$. As $y$ is chosen to be general, the points $x_1,\ldots,x_t$ are smooth in $\tilde{X}$.

\begin{defn} \label{def:branch}
An irreducible component of $\mathcal{B}$ is called a \emph{branch component} if the fibre on $\tilde{X}$ of a general point $y$ has at least a (necessarily smooth) point $x_i$ with $m_i \geq 2$. The union of all the branch components is called the \emph{branch locus} of $\pi_P$. We will denote it by $B_P$.
Moreover, we can define the \emph{branching weight} of a point $y$ in the branch locus as
$$b(y):=\sum_{i=1}^t (m_i -1) \geq 1.$$
We say that $y \in B_P$ is a \emph{simple branch point} if $b(y)=1$. 
\end{defn}

There is a relationship between the branching weight and the permutation type of the corresponding element in $S_d$ via the monodromy map. In particular, simple branch points correspond to transpositions in the monodromy group, see \cite[Section II.3]{H}. The following lemma is proved in \cite[Lemma 4.6]{Miranda}. We report its statement in the particular case of projections by using the notation introduced before.

\begin{lem}  \label{lem:permutations}
Consider the projection $\pi_P$ of a irreducible reduced planar curve $X$ from a point $P \notin X$. Let $y$ be a point of $B_P$. The cycle structure of the permutation representing a small loop around $y$ in the monodromy group is $(m_{1}, \ldots, m_{t})$, where the $m_i$ are defined as above.
\end{lem}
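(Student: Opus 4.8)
The plan is to reduce to the standard local description of a branched covering of compact Riemann surfaces and to compute the monodromy chart by chart. Since $X$ is a curve the base is $\bP^1$, and the relevant map is $\phi = \pi_P \circ \nu \colon \tilde X \to \bP^1$, where $\nu \colon \tilde X \to X$ is the normalisation introduced above. As $\tilde X$ is a smooth projective curve, $\phi$ is a finite holomorphic branched cover of degree $d$, and over the chosen general point $y$ its fibre consists of the smooth points $x_1, \dots, x_t$ with local multiplicities $m_1, \dots, m_t$ summing to $d$. Over a general base point both $X$ and $\tilde X$ carry $d$ distinct smooth points in the fibre and $\nu$ is a bijection there, so $\pi_P$ and $\phi$ have the same covering structure away from the finitely many branch points; hence the permutation attached to a small loop around $y$ is the same whether computed for $\pi_P$ or for $\phi$, and it suffices to analyse $\phi$.

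First I would put $\phi$ in local normal form near each preimage. For every $x_i$ there are holomorphic coordinates $w$ on $\tilde X$ centred at $x_i$ and $u$ on $\bP^1$ centred at $y$ in which $\phi$ reads $u = w^{m_i}$; this is the standard local form of a non-constant holomorphic map between Riemann surfaces, with $m_i$ the ramification index. By properness of $\phi$ and finiteness of the fibre, for a sufficiently small disc $\Delta$ around $y$ the preimage $\phi^{-1}(\Delta)$ is a disjoint union of small discs $\Delta_1, \dots, \Delta_t$ around the $x_i$, on each of which $\phi$ is the $m_i$-fold cover $w \mapsto w^{m_i}$.

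Next I would compute the local monodromy. Fixing a base point $u_0$ on the boundary circle of $\Delta$ and the loop $\gamma(\theta) = u_0 e^{2\pi i \theta}$ for $\theta \in [0,1]$, the preimages of $u_0$ inside $\Delta_i$ are the $m_i$-th roots $w = u_0^{1/m_i}\zeta^{\,j}$ with $\zeta = e^{2\pi i / m_i}$ and $j = 0, \dots, m_i-1$. Lifting $\gamma$ through $w \mapsto w^{m_i}$ advances the argument of $w$ by $2\pi/m_i$, so the lift starting at the $j$-th root ends at the $(j+1)$-th; hence the monodromy restricted to $\phi^{-1}(u_0) \cap \Delta_i$ is an $m_i$-cycle. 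Since the clusters $\phi^{-1}(u_0) \cap \Delta_i$ are disjoint and the loop acts independently on each, the total permutation is the product of these disjoint cycles, which has cycle structure $(m_1, \dots, m_t)$, as claimed.

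The only genuinely delicate point is the global-to-local reduction of the second paragraph: one must know that the monodromy of the honest small loop in the base decomposes as the product of the local monodromies around the individual $x_i$, with no interference between the clusters. This is exactly what the disjoint-disc description $\phi^{-1}(\Delta) = \Delta_1 \sqcup \cdots \sqcup \Delta_t$ provides, and it rests on the properness of $\phi$ together with the local normal form; once this fibred structure over $\Delta$ is in place, the remaining computation is the elementary one above. This is the content of \cite[Lemma 4.6]{Miranda}.
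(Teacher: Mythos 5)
Your proof is correct and is essentially the argument the paper relies on: the paper establishes this lemma by citing Miranda's Lemma 4.6, whose proof is exactly the local normal form computation ($u = w^{m_i}$ on disjoint discs, each contributing an $m_i$-cycle) that you carry out. Your opening reduction from $\pi_P$ on the possibly singular curve $X$ to $\phi = \pi_P \circ \nu$ on the normalisation $\tilde{X}$ correctly supplies the one step needed to apply that Riemann-surface statement in the paper's setting, so there is no gap.
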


\section{Focal loci of a family of lines}\label{Fuochi}
In this section we discuss some special families of lines related to our hypersurface which will be important for their relationship with the monodromy group. We refer the reader to \cite[Chapter III.9]{Hartshorne} and to \cite[Chapter 4.6.7]{Sernesi} for background material about families of algebraic spaces. \\

Let $\cX$ be a flat family of closed subschemes of $\bP^{n+1}$ parametrised by a integral base scheme $S$. This can be described by the following diagram, where the map $i$ is the inclusion and $p,q$ are the projections on the first and second factor respectively:
\begin{equation*} 
\xymatrix{
\cX \ar[d]_{p|_{\cX}} \ar[dr]_{f} \ar@{^{(}->}[r]^-i & S \times \bP^{n+1} \ar[d]^{q} & \\
S &  \bP^{n+1}.
}
\end{equation*}
\begin{defn}
The kernel of the differential $\mathrm{d}f$ defines a sheaf $\cF$ over $\cX$ and it is called \emph{focal sheaf}. The locus $\cF(\cX)$, i.e. the support of the sheaf, is called the \emph{focal scheme} or, more classically, \emph{focal locus}.
\end{defn}

From now on we will consider only the case in which $\cX$ is a family of lines in $\bP^{n+1}$. We can think of $S$ as a subscheme of the Grassmannian $\bG(1,\bP^{n+1})$. The family $\cX$ can be interpreted via the map $f$ as the subset of the points in $\bP^{n+1}$ of the corresponding lines. 

\begin{defn} \label{defn:fillingfamily}
The family $\cX$ of lines in $\bP^{n+1}$ over the base $S$ is called \emph{filling family} if the dimension of $S$ is $n$ and the map $f=i \circ q$ is dominant.
\end{defn}
The focal locus $\cF(l_s)$ restricted to a general line $l_s$ of the family $\cX$ is where the following map has not maximal rank 
\begin{equation*}
    \mathcal{O}_{l_s}^{\oplus n}\cong T_{S,s}\otimes \mathcal{O}_{l_s} \to \mathcal{N}_{l_s | \bP^{n+1}} \cong \mathcal{O}_{l_s}(1)^{\oplus n}
\end{equation*}

Let $\cX$ be a filling family of lines in $\bP^{n+1}$, so that $\dim \cX=n$.
Assume $\cX$ is locally parametrised by $S:=S(u_1, \ldots, u_{n})$. 
The line $l_s$ corresponding to a point $s \in S$ can be described by the intersection of $n$ distinct hyperplanes
\begin{equation} \label{eqn:linea}
l_s:=\{a_1(s) \cdot \underline{x}= \ldots = a_{n}(s) \cdot \underline{x}=0\}.
\end{equation}
Here $\underline{x}=(x_0:\ldots:x_{n+1})$ is the vector of the coordinates in $\bP^{n+1}$ and $a_i(s)=(a_i(s)_0:\ldots :a_i(s)_{n+1})$ determines the i-th hyperplane. 
We will denote by $\partial_{u_k} a_i(s)_j$ the partial derivative of $a_i(s)_j$ with respect to the variable $u_k$, and inductively for high order derivatives $\partial_{u_k, u_l} a_i(s)_j$, and so on. In the following we will omit the dependency on $s$, by writing for instance just $\partial_{u_i} a_i$ for the vector $(\partial_{u_i} a_1(s)_0 :\ldots : \partial_{u_i} a_i(s)_{n+1})$. 
With this notation, the equation of the focal locus on the line $l_s$ is
\begin{equation} \label{Eqn:schemafocale}
\det \begin{pmatrix}
(\partial_{u_1} a_1) \cdot \underline{x} & \cdots & (\partial_{u_{n}} a_{1}) \cdot \underline{x}\\
\vdots & & \vdots\\
(\partial_{u_1} a_{n}) \cdot \underline{x} & \cdots & (\partial_{u_{n}} a_{n}) \cdot \underline{x}\\
\end{pmatrix} = 0,
\end{equation}
together with the equations in (\ref{eqn:linea}).

\begin{lem} \label{lem: degree focal scheme} \cite[Proposition 4.3]{CF}
Let $\cX$ be a filling family of lines in $\bP^{n+1}$ and let $s \in S$ be a general point of the base. Then the focal locus in the fibre $l_s$ consists of $n$ points counted with the right multiplicity as root of Equation (\ref{Eqn:schemafocale}).
\end{lem}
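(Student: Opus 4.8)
The plan is to read the focal-locus equation (\ref{Eqn:schemafocale}) as the vanishing of the determinant of the bundle map
$\varphi_s \colon \mathcal{O}_{l_s}^{\oplus n} \cong T_{S,s}\otimes \mathcal{O}_{l_s} \to \mathcal{N}_{l_s|\bP^{n+1}} \cong \mathcal{O}_{l_s}(1)^{\oplus n}$
displayed just before the statement, and then to count its zeros on $l_s \cong \bP^1$. The determinant of a map between vector bundles of equal rank $n$ is a global section of $\det(\mathcal{N}_{l_s|\bP^{n+1}}) \otimes \det(\mathcal{O}_{l_s}^{\oplus n})^\vee \cong \mathcal{O}_{l_s}(n)$, so a nonzero $\det \varphi_s$ has exactly $n$ zeros counted with multiplicity; the entries $(\partial_{u_k} a_i)\cdot \underline{x}$ being linear in $\underline{x}$ confirms that (\ref{Eqn:schemafocale}) has degree $n$ as a polynomial on $l_s$. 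Thus the whole statement reduces to showing that $\det \varphi_s \not\equiv 0$ for general $s$.

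To establish this nonvanishing, I would first trace the origin of $\varphi_s$. The projection $p|_{\cX}\colon \cX \to S$ has fibre $l_s$ over $s$, which gives the restriction of the relative tangent sequence $0 \to T_{l_s} \to T_{\cX}|_{l_s} \to T_{S,s}\otimes \mathcal{O}_{l_s}\to 0$; inside $\bP^{n+1}$ the line sits in the normal sequence $0 \to T_{l_s} \to T_{\bP^{n+1}}|_{l_s} \to \mathcal{N}_{l_s|\bP^{n+1}}\to 0$. The differential $\mathrm{d}f$ maps the first sequence to the second, restricting to the canonical isomorphism on the common subbundle $T_{l_s}$ (since $f$ embeds each line onto itself); hence the induced map on quotients is precisely $\varphi_s$. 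A short diagram chase then shows that, at a point $x \in l_s$, the differential $\mathrm{d}f_x$ is an isomorphism if and only if $\varphi_s(x)$ is.

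Next I would invoke the filling hypothesis. As $\dim S = n$, the total space has $\dim \cX = n+1 = \dim \bP^{n+1}$, and $f$ is dominant by the definition of a filling family; therefore $f$ is generically finite and $\mathrm{d}f_x$ is an isomorphism on a dense open subset $\cX^\circ \subseteq \cX$. Because the lines of a filling family cover $\bP^{n+1}$, the general line meets $\cX^\circ$, so for general $s$ there is a point $x \in l_s$ with $\varphi_s(x)$ invertible, giving $\det \varphi_s(x)\neq 0$ and hence $\det \varphi_s \not\equiv 0$. Combined with the first paragraph, this yields that the focal locus on a general $l_s$ consists of exactly $n$ points counted with multiplicity.

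The main obstacle I expect is the bookkeeping in the second step: one must check carefully that $\mathrm{d}f$ carries the fibrewise tangent sequence to the normal sequence and that the resulting quotient map is genuinely the determinant (\ref{Eqn:schemafocale}), not merely a nonzero multiple, so that the degree count and the geometric focal scheme coincide. The remaining genericity point — that a general line of the family meets the \'etale locus of $f$ — is elementary but worth stating explicitly, since it is exactly where both the \emph{filling} hypothesis and the genericity of $s$ enter.
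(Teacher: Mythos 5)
Your argument is correct, but note that there is nothing in the paper to compare it against: the paper states this lemma with the citation \cite[Proposition 4.3]{CF} and gives no proof of its own, so your proposal supplies a self-contained argument where the paper simply outsources. The two ingredients you use are exactly the standard ones behind the cited result: first, that $\det \varphi_s$ is a section of $\mathcal{O}_{l_s}(n)$ (equivalently, that Equation (\ref{Eqn:schemafocale}) restricted to $l_s$ has degree $n$, since each entry is linear in $\underline{x}$); second, that $\det \varphi_s \not\equiv 0$ for general $s$ because a filling family has $\dim \cX = n+1$ with $f$ dominant, hence $f$ is generically finite and (in characteristic $0$) generically \'etale. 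Your snake-lemma identification of $\ker \mathrm{d}f$ with $\ker \varphi_s$ along $l_s$ is also the right way to see that the focal scheme on the line is cut out by the determinant, which addresses the ``not merely a nonzero multiple'' worry you raise. One small repair: your justification that the general line meets the \'etale locus $\cX^\circ$ --- ``because the lines of a filling family cover $\bP^{n+1}$'' --- is not the right reason; covering $\bP^{n+1}$ is neither needed nor sufficient here. The correct, equally elementary, reason is that $\cX$ is irreducible (a flat family of lines over an integral base), so $\cX \smallsetminus \cX^\circ$ is a proper closed subset and therefore cannot contain the fibre $l_s = p^{-1}(s)$ for $s$ in a dense open subset of $S$; with that substitution the proof is complete.
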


\begin{defn}
A point $P$ in $\bP^{n+1}$ is called \emph{fundamental} for the family $\cX$ if there is a subfamily $\sX'$ of lines all passing through it. The \emph{fundamental locus} is the subset of $\bP^{n+1}$ of fundamental points. 
\end{defn}

The following facts on fundamental points are well known, and their origins date back to Segre, in the work \cite{Segre}:
 
\begin{lem}\label{lem:fundpoint}
Consider a filling family of lines in $\bP^{n+1}$ and assume there is a subfamily $\sX'$ of lines all passing through a point $P$. If the dimension of the base of $\sX'$ is $k$, then $P$ is a focus of multiplicity $k$.
\end{lem}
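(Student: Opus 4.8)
The plan is to work in local analytic coordinates on the base adapted to the subfamily $\sX'$, and to read off the multiplicity directly from the determinantal equation (\ref{Eqn:schemafocale}). First I would choose coordinates $(u_1, \ldots, u_n)$ on $S$ around a general point $s$ of the base of $\sX'$ so that $\sX'$ is the locus $u_{k+1} = \cdots = u_n = 0$; thus the subfamily is parametrised by $(u_1, \ldots, u_k)$ and the corresponding lines all pass through $P$. In the notation of (\ref{eqn:linea}) this means
\begin{equation*}
a_i(u_1, \ldots, u_k, 0, \ldots, 0) \cdot P = 0, \qquad i = 1, \ldots, n,
\end{equation*}
identically in $(u_1, \ldots, u_k)$. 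Differentiating this identity with respect to $u_j$ for $j \leq k$ yields $(\partial_{u_j} a_i) \cdot P = 0$ for all $i$ and all $j \leq k$. Equivalently, when the focal matrix in (\ref{Eqn:schemafocale}) is evaluated at $\underline{x} = P$, its first $k$ columns vanish identically, whereas for $j > k$ the entries $(\partial_{u_j} a_i) \cdot P$ need not vanish, since moving in the directions $u_{k+1}, \ldots, u_n$ displaces the line off $P$.

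Next I would restrict the focal equation to the line $l_s$. Parametrising $l_s$ linearly by $\underline{x}(t)$ with $\underline{x}(0) = P$ and direction vector $v$, every entry $(\partial_{u_j} a_i) \cdot \underline{x}(t)$ of the matrix is an affine function of $t$; for $j \leq k$ it vanishes at $t = 0$ by the previous step, hence equals $t\,(\partial_{u_j} a_i)\cdot v$. Pulling one factor of $t$ out of each of the first $k$ columns by multilinearity of the determinant shows that the restriction of (\ref{Eqn:schemafocale}) to $l_s$ is divisible by $t^k$. Therefore $P$ is a root of the focal equation of multiplicity at least $k$, i.e.\ a focus of multiplicity at least $k$. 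This lower bound is the robust core of the statement and is the part actually invoked later.

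The main obstacle is the reverse inequality, namely that the multiplicity equals $k$ rather than exceeding it. After factoring out $t^k$, the leading coefficient is the determinant of the matrix whose first $k$ columns are $(\partial_{u_j} a_i)\cdot v$ for $j \leq k$ and whose remaining columns are $(\partial_{u_j} a_i)\cdot P$ for $j > k$, and one must show this does not vanish for general $s \in \sX'$. Here the hypothesis that the base of $\sX'$ has dimension \emph{exactly} $k$ is essential: the directions $u_{k+1}, \ldots, u_n$ genuinely move the line away from $P$, so the last $n-k$ columns remain independent of the first block for general $s$. I would combine this with Lemma \ref{lem: degree focal scheme}, which guarantees that the focal scheme on a general line of the family is finite of degree $n$, so that $P$ cannot absorb more than its share of focal multiplicity; passing to a general point $s$ of the base of $\sX'$ rules out the degenerate configurations where the leading coefficient accidentally vanishes.
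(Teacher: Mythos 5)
The paper never proves this lemma: it is quoted as a classical fact going back to Segre, so there is no in-paper argument to compare with, and your proposal has to stand on its own. Its core does. Choosing local coordinates in which the base of $\sX'$ is $\{u_{k+1}=\cdots=u_n=0\}$, differentiating the identity $a_i(u_1,\ldots,u_k,0,\ldots,0)\cdot P\equiv 0$ to get $(\partial_{u_j}a_i)\cdot P=0$ for $j\leq k$, and then pulling a factor of $t$ out of each of the first $k$ columns of the matrix in (\ref{Eqn:schemafocale}) along the parametrisation $\underline{x}(t)=P+tv$ is exactly the standard argument, and it correctly gives that $P$ is a focus of multiplicity at least $k$ on the general line of $\sX'$. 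Note that this lower bound is the only content the paper ever uses: in the proof of Theorem \ref{thm:main} (and in Theorem \ref{thm:transpositionnotcone}) the focal multiplicities coming from Lemma \ref{lem:fundpoint} and Lemma \ref{lem:focalnewtuttoinsieme} are only summed up in order to exceed the degree-$n$ bound of Lemma \ref{lem: degree focal scheme}, so ``multiplicity $k$'' should be read as ``multiplicity at least $k$''.

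The gap is in your argument for the reverse inequality. First, the claim that ``the directions $u_{k+1},\ldots,u_n$ genuinely move the line away from $P$, so the last $n-k$ columns remain independent of the first block'' is an assertion, not a proof: the dimension hypothesis only guarantees that each of the last $n-k$ columns is nonzero at $t=0$ for general $s$, which is far from the nonvanishing of the full $n\times n$ determinant $\det N(0)$ (the columns could be nonzero yet dependent, or the first block could be degenerate). Second, the appeal to Lemma \ref{lem: degree focal scheme} is misapplied: that lemma concerns the \emph{general} line of the filling family $\cX$, whereas for $k<n$ the lines of $\sX'$ form a positive-codimension subfamily, so they are precisely the kind of special lines to which the lemma's genericity conclusion does not extend; a priori the focal polynomial could even vanish identically on them. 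All one can say cheaply is that whenever the focal polynomial on $l_s$ is not identically zero it is a section of $\mathcal{O}_{l_s}(n)$, which caps the multiplicity of $P$ at $n$, not at $k$. So exact equality is not established by your argument (and is not what the classical statement asserts); if you restrict your write-up to the lower bound, the proof is complete and fits every use of the lemma in the paper.
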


The number of lines of a family $\cX$ through a general point of $\bP^{n+1}$ is classically called the \emph{order} of the family $\cX$. 

\begin{prop}\cite[Proposition 2.7]{DePoi}\label{prop:depoi1}
Let $\cX$ be a filling family of lines in $\bP^{n+1}$ of order $1$. Then, the focal locus coincides with the fundamental locus. 
\end{prop}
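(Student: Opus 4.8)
The plan is to reinterpret the two loci through the geometry of the evaluation map $f\colon \cX \to \bP^{n+1}$ from the defining diagram, and to exploit that order $1$ forces $f$ to be birational. One inclusion, that the fundamental locus is contained in the focal locus, is already furnished by Lemma~\ref{lem:fundpoint}: a $k$-dimensional subfamily of lines through $P$ makes $P$ a focus of multiplicity $k\ge 1$. So the real content is the reverse inclusion, focal $\subseteq$ fundamental, and this is exactly where the hypothesis on the order must be used.

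First I would make a harmless reduction to a smooth total space. Since $\cX \to S$ is the $\bP^1$-bundle pulled back from the universal line under the classifying map $S \to \bG(1,\bP^{n+1})$, I may assume (working over the smooth locus of $S$, which determines both loci since they are read off general lines and general points) that $S$ is smooth and projective, so that $\cX$ is smooth projective of dimension $n+1$ and $f$ is proper. Because $\cX$ is filling of order $1$, the map $f$ is dominant and generically one-to-one onto $\bP^{n+1}$, hence birational.

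The key observation is that the focal locus is precisely the critical locus of $f$. Restricting $\mathrm{d}f\colon T_\cX \to f^\ast T_{\bP^{n+1}}$ to a line $l_s$ and using the relative tangent sequence $0 \to T_{l_s} \to T_\cX|_{l_s} \to T_{S,s}\otimes\mathcal{O}_{l_s}\to 0$, the differential is the inclusion $T_{l_s}\hookrightarrow T_{\bP^{n+1}}|_{l_s}$ on the fibre direction and the characteristic map $T_{S,s}\otimes \mathcal{O}_{l_s}\to \mathcal{N}_{l_s|\bP^{n+1}}$ on the quotient. Hence $\mathrm{d}f$ drops rank at $x\in l_s$ exactly where the determinant in Equation~(\ref{Eqn:schemafocale}) vanishes, so $\cF(\cX)=\{z\in\cX : \mathrm{d}f_z \text{ is not an isomorphism}\}$. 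Dually, $P$ is fundamental if and only if $f^{-1}(P)$ is positive-dimensional, since the projection $p$ identifies $f^{-1}(P)$ with the set of lines of the family through $P$.

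With these translations the reverse inclusion becomes a statement about birational morphisms. Take $z\in\cF(\cX)$ and $P=f(z)$, and suppose for contradiction that $f^{-1}(P)$ is finite. By properness and upper semicontinuity of fibre dimension, $f$ is finite over a neighbourhood $V$ of $P$; being finite and birational onto the smooth, hence normal, variety $V$, it is an isomorphism there by Zariski's Main Theorem. Then $\mathrm{d}f_z$ would be an isomorphism, contradicting $z\in\cF(\cX)$. Therefore $f^{-1}(P)$ is positive-dimensional, i.e. $P$ is fundamental, and combined with Lemma~\ref{lem:fundpoint} this yields the claimed equality of loci. The main obstacle is precisely this last step, and it is exactly here that order $1$ is indispensable: for higher order $f$ is a genuinely ramified finite cover whose ramification (focal) locus need not consist of fundamental points, and the birational mechanism converting a rank drop of $\mathrm{d}f$ into a contracted, hence positive-dimensional, fibre breaks down. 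The only other point requiring care is the smooth-and-proper reduction of the first paragraph, which legitimises both the properness used above and the identification of $\cF(\cX)$ with the critical locus.
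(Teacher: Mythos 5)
The paper itself contains no proof of this proposition: it is imported wholesale from \cite{DePoi} (De Poi's Proposition 2.7), so the only comparison available is with the cited source, and your argument is in essence that standard one. The core mechanism is correct and it is the right mechanism: order $1$ forces $f$ to be birational (in characteristic $0$, generically one-to-one and dominant implies birational); the focal locus is the critical locus of $f$, by exactly the normal-bundle computation you describe, which matches Equation~(\ref{Eqn:schemafocale}); the fundamental locus is the locus of positive-dimensional fibres of $f$; one inclusion is Lemma~\ref{lem:fundpoint}; and the reverse inclusion follows from Zariski's Main Theorem, since a finite fibre through a critical point would make $f$ finite and birational over a normal neighbourhood, hence an isomorphism there. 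Your closing remark about why order $1$ is indispensable is also accurate.

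The step I would not accept as written is the reduction in your first paragraph, which as stated is internally inconsistent and whose justification is wrong. ``Working over the smooth locus of $S$'' and ``assume $S$ is smooth and projective'' pull in opposite directions: the smooth locus of the closure of $S$ in $\bG(1,\bP^{n+1})$ need not be projective, while a resolution of singularities of that closure destroys your identification of $f^{-1}(P)$ with the set of lines of the family through $P$ (a resolution no longer embeds in the Grassmannian, so an exceptional fibre of $f$ can be positive-dimensional without any positive-dimensional family of lines through $P$ existing, and the whole translation of ``fundamental'' collapses). Moreover the parenthetical claim that both loci ``are read off general lines and general points'' is false --- both loci concern precisely the special points. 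The repair is standard but different from what you wrote: replace $S$ by its \emph{closure} $\bar S$ in the Grassmannian (a dimension count shows the boundary lines sweep out at most a divisor, so the order is still $1$; the focal sheaf is local, so $\cF(\cX)$ is the restriction of $\cF(\bar\cX)$), normalise rather than resolve if $\bar S$ is singular (normalisation is finite, so it does not alter fibre dimensions and keeps fibres of $f$ identified, up to a finite map, with families of lines through the point), and then run your ZMT argument for the proper model; Zariski connectedness of the fibres of the proper birational map then guarantees that a positive-dimensional fibre has no isolated points, which is what is needed to transfer ``fundamental'' back to the original open family. With that substitution your proof is complete.
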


We now prove two results on the fundamental locus of particular filling families of lines in $\bP^{n+1}$ that we will use later. 
The following result is a generalisation of parts (a) and (c) of Proposition 5.1 in \cite{CF}. 

\begin{lem} \label{lem:focalnewtuttoinsieme}
Consider a filling family $\cX$ of lines in $\bP^{n+1}$ and an irreducible reduced hypersurface $X$. Assume the general $l \in \cX$ tangent to $X$ at a general point $P$. 
Then $P$ is a focus on $l$. Moreover, if the contact order of $l$ with $X$ at $P$ is at least $2$, then $P$ is a focus with multiplicity at least $2$ on $l$.
\end{lem}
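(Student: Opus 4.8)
The plan is to work with the explicit determinantal description of the focal locus given in Equation~(\ref{Eqn:schemafocale}), enriched by the geometry of the tangency hypothesis. Since $X$ is irreducible and reduced and $P$ is general, I may assume $P$ is a smooth point of $X$, so that the projective tangent hyperplane $T_P X = \{\nabla F(P)\cdot x = 0\}$ is well defined, where $F$ is a defining polynomial of $X$. Shrinking $S$ to a dense open subset (or passing to an analytic/\'etale local chart) I choose a section $s \mapsto P(s)$ assigning to $l_s$ its tangent point. This yields two identities that drive the whole argument: the incidence relation $a_i(s)\cdot P(s)\equiv 0$ for $i=1,\dots,n$ (because $P(s)\in l_s$), and the membership relation $F(P(s))\equiv 0$ (because $P(s)\in X$). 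Tangency of $l_s$ at $P(s)$ means the line direction lies in $T_P X$, hence $T_P X$ contains $l_s$; as the hyperplanes through $l_s$ are exactly the span of $a_1(s),\dots,a_n(s)$, this forces
\[
\nabla F(P) = \sum_{i=1}^{n} c_i\, a_i(s),\qquad c=(c_1,\dots,c_n)\neq 0,
\]
with $c\neq 0$ precisely because $P$ is smooth.

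For the first assertion, I differentiate the incidence relation in the base direction $u_k$ to get $(\partial_{u_k}a_i)\cdot P = -\,a_i\cdot\partial_{u_k}P$. Thus the focal matrix $M$ of Equation~(\ref{Eqn:schemafocale}), evaluated at $x=P$, has entries $M_{ik} = -\,a_i\cdot \partial_{u_k}P$. On the other hand, differentiating $F(P(s))\equiv 0$ gives $\nabla F(P)\cdot \partial_{u_k}P = 0$ for every $k$; substituting the expression for $\nabla F(P)$ shows that $\sum_i c_i M_{ik}=0$ for all $k$, i.e. $c$ lies in the left kernel of $M$. Since $c\neq 0$, the matrix $M$ is singular at $P$, so $P$ is a root of the focal equation, i.e. a focus on $l_s$. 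Geometrically this is the statement that the containment $T_P l_s \subset T_P X$ makes the projection $T_P X \to (\mathcal N_{l_s\mid\bP^{n+1}})_P$ drop rank, so the characteristic map cannot be injective at $P$.

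For the second assertion I parametrise $l_s$ by $x(t)=P+tR$ with $R$ spanning the line, so that $a_i\cdot R=0$, and I study the order of vanishing at $t=0$ of $D(t):=\det M(t)$, where $M(t)_{ik}=(\partial_{u_k}a_i)\cdot x(t)=M_{ik}+t\,(\partial_{u_k}a_i)\cdot R$. By the first part $D(0)=0$. If $M(0)$ has corank $\ge 2$ then $D$ automatically vanishes to order $\ge 2$ and we are done, so I may assume $\operatorname{corank}M(0)=1$ and use $D'(0)=\operatorname{tr}\!\big(\operatorname{adj}(M(0))\,M'(0)\big)=w^{\top}M'(0)v$, where $v$ and $w$ span the right and left kernels of $M(0)$ (so $w=c$). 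A direct computation, using $a_i\cdot R=0$ and the chain rule $\partial_{u_k}\nabla F(P)=\operatorname{Hess}(F)(P)\,\partial_{u_k}P$, collapses this to $w^{\top}M'(0)v=\big(\sum_k v_k\partial_{u_k}P\big)^{\top}H R$ with $H=\operatorname{Hess}(F)(P)$. The right-kernel condition $M(0)v=0$ says $\sum_k v_k\partial_{u_k}P\in\langle P,R\rangle$, say $=\alpha P+\beta R$. Finally Euler's identity gives $HP=(d-1)\nabla F(P)$, whence $P^{\top}HR=(d-1)\,\nabla F(P)\cdot R=0$ by tangency, while $R^{\top}HR=0$ is exactly the condition that the contact order of $l_s$ at $P$ be $\ge 2$. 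Hence $D'(0)=\alpha\,P^{\top}HR+\beta\,R^{\top}HR=0$, so $P$ is a focus of multiplicity $\ge 2$.

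The routine inputs are the two differentiated identities and the determinant-derivative formula; the main obstacle is the second-order bookkeeping in the third paragraph, namely showing that all first-order terms reorganise into the single Hessian quadratic form $(\alpha P+\beta R)^{\top}HR$. The two facts that make this work, and which I expect to be the crux, are the simplification $a_i\cdot R=0$ (which removes the stray $\partial_{u_k}c_i$ terms) and Euler's relation, which together force both $P^{\top}HR$ and $R^{\top}HR$ to vanish under tangency and higher contact order respectively.
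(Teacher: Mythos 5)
Your proof is correct, and while your first assertion rests on essentially the same mechanism as the paper's, your treatment of the multiplicity statement takes a genuinely different route. The paper works in an adapted frame: it takes $a_1$ to be the tangent hyperplane to $X$ at $P$ and the remaining hyperplanes $b_2,\ldots,b_n$ through $l$, parametrises $l$ as $P+\lambda\,\partial_{u_1}P$, and shows from the differentiated incidence relations that in the focal matrix of Equation (\ref{Eqn:schemafocale}) the first row and the first column carry a factor of $\lambda$ while the $(1,1)$ entry vanishes identically, so the determinant is $\lambda^2\alpha(\lambda)$; no defining polynomial of $X$ ever appears. You instead keep a general frame and let the defining polynomial $F$ do the work: tangency becomes $\nabla F(P)=\sum_i c_i a_i$, so your left-kernel vector $c$ is exactly the paper's vanishing first row read invariantly (their choice of frame amounts to $c=e_1$), and for the multiplicity you use Jacobi's formula, dispose of the corank $\geq 2$ case via $\operatorname{adj}(M(0))=0$, and collapse $D'(0)$ to the Hessian form $(\alpha P+\beta R)^{\top}HR$, which vanishes by Euler's identity combined with tangency (giving $P^{\top}HR=0$) and by the contact-order hypothesis (giving $R^{\top}HR=0$). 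What the paper's normal form buys is brevity and complete independence from $F$, Euler's identity and Jacobi's formula; what your version buys is an invariant formulation that exposes the role of the second fundamental form and treats the degenerate corank case explicitly rather than implicitly. Two small patches: $\operatorname{adj}(M(0))$ equals $vw^{\top}$ only up to a nonzero scalar (harmless, since only the vanishing of $D'(0)$ matters), and, as the paper does, you should quote Lemma \ref{lem: degree focal scheme} to guarantee $D(t)\not\equiv 0$ on a general line of the filling family, so that ``focus of multiplicity at least $2$'' is meaningful.
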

\begin{proof}
We can assume that the hypersurface $X$ is parametrised locally around $P$ by the same $S$ which parametrises the family $\cX$, so $P:=P(u_1, \ldots, u_n)$. Moreover we can choose $a_1$ to define the tangent plane to $X$ at $P$. 
So we have
$$a_1 \cdot P = a_1 \cdot (\partial_{u_1} P) = \ldots = a_1 \cdot (\partial_{u_{n}} P)=0.$$
By taking partial derivatives and by using the previous relations, we get
\begin{equation} \label{Eqn:relationstangent}
(\partial_{u_1} a_1) \cdot  P = \ldots = (\partial_{u_{n}} a_1) \cdot  P =0.
\end{equation}
It immediately follows that $P$ satisfies Equation (\ref{Eqn:schemafocale}), and so is a focus on $l$ independently of the choices of the other $n-1$ hyperplanes.

Notice that the line $l$ is tangent to $X$, so we can assume $l$ to be explicitly parametrised as follows:
\begin{equation} \label{Eqn:explicitformofl}
l:= \left\{P + \lambda \partial_{u_1} P\right\}.
\end{equation}

Assume now that the contact order of $l$ with $X$ at $P$ is at least $2$, hence we get
$$a_1 \cdot (\partial_{u_1, u_1} P) = 0.$$
For any hyperplane $\{b \cdot \underline{x}=0\}$ passing through $P$ and containing $l$, we have that $b \cdot (\partial_{u_1} P)=0$ from Equation (\ref{Eqn:explicitformofl}), and hence, by taking derivatives, we get that 
\begin{equation} \label{Eqn:otherplanes}
(\partial_{u_1} b) \cdot  P=0.
\end{equation}

We can choose other $n-1$ independent vectors $b_2, \ldots, b_{n}$ in $\bP^{n+1}$ and the hyperplanes ${b_i \cdot \underline{x}=0}$, defined by passing through $P$ and containing $l$.
Hence, such line is given by the equations:
\begin{equation}\label{eqn:rettatangente}
    \left\{a_1 \cdot \underline{x} = b_2 \cdot \underline{x}=\ldots =b_{n} \cdot \underline{x}=0\right\},
\end{equation}
and the focal scheme on $l$ is described by Equation (\ref{Eqn:schemafocale}).

Now we can use Equation (\ref{Eqn:explicitformofl}) to express the focal scheme as a function of the parameter $\lambda$. Moreover, if we consider Equation (\ref{Eqn:relationstangent}) and Equation (\ref{Eqn:otherplanes}), we get a simplified form for our matrix:

\begin{equation} \label{eqn:matricefinale}
\det \begin{pmatrix}
0                                                     & \lambda (\partial_{u_2} a_1) \cdot (\partial_{u_1} P) & \cdots & \lambda (\partial_{u_{n}} a_1) \cdot (\partial_{u_1} P)\\
\lambda (\partial_{u_1} b_2) \cdot (\partial_{u_1} P)     & \cdots                                            & \cdots & \cdots\\
\vdots                                                &                                                   &        & \vdots\\
\lambda (\partial_{u_1} b_{n}) \cdot (\partial_{u_1} P) & \cdots                                            & \cdots & \cdots\\
\end{pmatrix} = 0. 
\end{equation}

Lemma \ref{lem: degree focal scheme} guarantees that the determinant is not identically zero whenever $l$ is a general element of the family. Such a determinant is given by $\lambda^2 \cdot \alpha (\lambda)= 0$, where $\alpha$ is a polynomial depending on $\lambda$. Hence the point $P$ is a focus of multiplicity at least $2$.
\end{proof}

In the proof of Theorem \ref{thm:main} we will have to deal also with a family of lines obtained by joining a curve and a variety of codimension $2$. At this purpose we generalise \cite[Lemma 2]{DePoi} to higher dimension.

\begin{lem} \label{lemma:depoi}
Let $F$ be a codimension $2$ subvariety of $\bP^{n+1}$, and $\sC\nsubseteq F$ be a curve not contained in a $\bP^{n-1}$. Assume that the family $\cX$ of lines joining $\sC$ and $F$ is filling. Then $F$ is linear and $\sC$ is rational. If $\sC\cap F= \emptyset$, $\sC$ is also linear. Otherwise, $F$ meets $\sC$ in $\deg(\sC) -1$ points. 
\end{lem}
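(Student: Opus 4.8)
The plan is to compute the focal scheme of a general line of $\cX$, exactly in the spirit of Lemma \ref{lem:focalnewtuttoinsieme}, and then convert the rigidity of that scheme into geometry. I would first parametrise $\sC=\{c(t)\}$ by a local parameter $t$ and $F=\{x(u_2,\dots,u_n)\}$ by local parameters $u_2,\dots,u_n$, so that $S\cong\sC\times F$ has dimension $n$ and a general line is $l=\{(1-\lambda)c(t)+\lambda x(u)\}$. Two fundamental subfamilies are visible: the $(n-1)$-dimensional family of lines from a fixed $c$ to $F$, and the one-dimensional family of lines from a fixed $x$ to $\sC$; by Lemma \ref{lem:fundpoint} these make $c$ a focus of multiplicity at least $n-1$ and $x$ a focus of multiplicity at least $1$ on $l$. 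Since Lemma \ref{lem: degree focal scheme} allows only $n$ foci in total, these are all of them: on a general line the focal scheme is exactly $(n-1)\,c+x$. Writing $p(\lambda)=(1-\lambda)c+\lambda x$ and using $\partial_t p=(1-\lambda)\dot c$, $\partial_{u_k}p=\lambda\,\partial_{u_k}x$, the focal determinant (computed as in the proof of Lemma \ref{lem:focalnewtuttoinsieme}) factors as $(1-\lambda)\lambda^{\,n-1}K$, with $K=\det[\dot c,\partial_{u_2}x,\dots,\partial_{u_n}x \bmod \langle c,x\rangle]$, confirming both the multiplicities and that $K\neq 0$ on a general line.

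Using that $\cX$ is a first order congruence (so that Proposition \ref{prop:depoi1} applies), the focal locus coincides with the fundamental locus, which is therefore exactly $\sC\cup F$. Sending a general point $p$ to the point of $\sC$ on its unique line of the congruence then defines a dominant rational map from $\bP^{n+1}$ to $\sC$; as $\bP^{n+1}$ is rationally connected and admits no non-constant map to a curve of positive genus, $\sC$ is rational.

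The heart of the matter, and the step I expect to be the main obstacle, is the linearity of $F$. Here I would generalise De Poi's planar computation by carrying the focal analysis to second order at the reduced focus $x$, whose focal direction is $\partial_t$. Requiring that the neighbouring rulings of the cone $\Sigma_x=\overline{\bigcup_c\langle c,x\rangle}$ keep focusing at $x$ translates into a condition on the mixed second derivatives $\partial_{u_j}\partial_{u_k}x$ modulo the embedded tangent space $T_xF$, i.e. on the projective second fundamental form of $F$, and should force it to vanish; a variety with vanishing second fundamental form is linear, so $F\cong\bP^{n-1}$. The hypotheses $\sC\not\subset F$ and $\sC\not\subset\bP^{n-1}$ enter precisely here, ruling out the degenerate possibilities (such as $F$ developable or a cone, with $\sC$ absorbed into a fixed tangent $\bP^{n-1}$) in which the second fundamental form could survive. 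The subtle point is that the naive second order focal locus is vacuous because of the product shape of $p(\lambda)$, so the computation must be organised intrinsically on $F$; controlling those mixed second derivatives is where the real work lies.

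Finally, with $F=\bP^{n-1}$ linear, the last assertions follow by projecting from $F$. The restriction $\pi_F|_{\sC}\colon\sC\to\bP^1$ has degree equal to the order of the congruence, namely one, hence is birational; on the other hand its degree equals $\deg(\sC)-\#(\sC\cap F)$, because the points of $\sC\cap F$ are forced base points of the pencil of hyperplanes through $F$. Therefore $\#(\sC\cap F)=\deg(\sC)-1$, and when $\sC\cap F=\emptyset$ there are no base points, so $\deg(\sC)=1$ and $\sC$ is a line.
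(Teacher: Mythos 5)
The lemma's central assertion --- the linearity of $F$ --- is not actually proved in your proposal. You reduce it to the vanishing of the projective second fundamental form of $F$, to be extracted from a ``second order focal analysis'' at the reduced focus $x$, but you yourself concede that the naive second-order computation is vacuous and that controlling the mixed second derivatives is ``where the real work lies''. Indeed it must be vacuous: your own first paragraph shows that the focal scheme of a general line is exactly $(n-1)c+x$ \emph{whatever} $F$ is, so first-order focal data on the general line cannot distinguish a linear $F$ from a non-linear one, and no mechanism is supplied that actually forces the second fundamental form to vanish. What you have at the step carrying the content of the lemma is a plan, not an argument. The paper proceeds quite differently: writing $d_1=\deg(\sC)$ and $d_2=\deg(F)$, it first deduces that $\cX$ has order $1$ from the fact that the focal locus has codimension $2$, by invoking \cite[Theorem 2.1]{Depoi2}; then, if $\sC\cap F\neq\emptyset$, it counts in two ways the lines meeting $\sC\cap F$ --- through a general point $P\in\bP^{n+1}$ the cones over $\sC$ and $F$ with vertex $P$ meet in $d_1d_2$ lines, exactly one of which is the line of the family through $P$, giving $d_1d_2-1$, while through a general $Q\in\sC$ the analogous count gives $d_2(d_1-1)$ --- and comparing the two counts yields $d_2=1$; if instead $\sC\cap F=\emptyset$, it computes the order by Schubert calculus, $M(\sC)\cdot M(F)=d_1d_2(\sigma_{n,0}+\sigma_{n-1,1})$, so order $1$ forces $d_1=d_2=1$. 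Your proposal contains no substitute for either computation.

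A second, related gap: you write ``using that $\cX$ is a first order congruence'' as if it were a hypothesis. It is not; it is precisely what \cite[Theorem 2.1]{Depoi2} yields once one knows the focal locus has codimension at least $2$ (which your focal-scheme computation does establish, consistently with Lemma \ref{lem: degree focal scheme} and Lemma \ref{lem:fundpoint}). Since your rationality argument and your final projection-from-$F$ argument both rest on the order being $1$, this must be justified, not assumed. For the record, the sound parts of your proposal are: the factorisation $(1-\lambda)\lambda^{n-1}K$ of the focal determinant; the L\"uroth-type argument for the rationality of $\sC$ (the paper instead refers to \cite[Theorem 0.7]{DePoi}); and, once $F$ is known to be linear and the order is known to be $1$, the identification of the order with $\deg(\sC)-\#(\sC\cap F)$, which cleanly gives the last two assertions. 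But without an actual proof that $F$ is linear, the proposal does not prove the lemma.
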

\begin{proof}
In this proof we will follow \cite{DePoi}, in particular we will denote the cone of vertex $V$ over $Y$ by $\chi_{Y,V}$. We also refer to it for the notations and preliminaries about Schubert cycles.
The points of $X^\sing$ are fundamental, hence by Proposition \ref{prop:depoi1} and Lemma \ref{lem: degree focal scheme}, the focal locus has codimension exactly $2$. We can apply \cite[Theorem 2.1]{Depoi2} and conclude that the order of $\cX$ must be $1$. Let $d_1$ and $d_2$ be the degree of $\sC$ and $F$, respectively.
Assume first that $\sC \cap F \neq \emptyset$, and let $m$ be the number of lines through a general point $P$ passing through $\sC \cap F$. 
The cone $\chi_{\sC,P}$ has dimension $2$ and degree $d_1$, while the cone $\chi_{F,P}$ has dimension $n$ and degree $d_2$. They intersect in $d_1d_2$ lines through the point $P$, and we know that one of these lines belongs to the family $\cX$. Notice that this line does not pass through $\sC \cap F$ by construction. Therefore, we have that $m = d_1d_2 - 1$.
Now consider the cone $\chi_{\sC,Q}$, where $Q$ is a general point of $\sC$, which has degree $d_1 -1$. Recall that the general line of $\cX$ meets $\sC$ (resp. $F$) at a single point and those are the only focal points on the line. The cone $\chi_{\sC,Q}$ is not contained in the cone $\chi_{Q,F}$ of degree $d_2$, since otherwise the lines secant to $\sC$ will intersect $F$, and hence will all be composed of focal points.
As before, the intersection of the two cones gives $d_2(d_1-1)$ lines through $Q$ meeting $\sC \cap F$: if one of these lines meets $\sC$ and $F$ in distinct points, then all the line would be focal. Therefore, we have that $m=d_2(d_1-1)$. 
Summing up, $d_2d_1-1 = d_2d_1-d_2$ which gives that $d_2=\deg(F)=1$.

We are left with the case  $\sC \cap F = \emptyset$. By following the notation of \cite{DePoi}, let $M(F)$ be the family of lines in $\bP^{n+1}$ meeting $F$. It is a codimension $1$ family in $\bG(1, \bP^{n+1})$, hence it can be written as $$M(F)=d_2 \sigma_{1,0}.$$
In the same way, let $M(\sC)$ be the family of lines meeting $\sC$. It is a codimension $n-1$ family in $\bG(1, \bP^{n+1})$, hence it can be written as 
$$M(\sC)=\sum_{t=0}^k a_t \sigma_{n-1-t,t}= a_0\sigma_{n-1,0}+\ldots+a_k\sigma_{n-1-k,k},$$
where $k=\lfloor{\frac{n-1}{2}}\rfloor$. 
The complementary cycle of $\sigma_{n-1-t,t}$ with $t>0$ is $\sigma_{n-t,t+1}$, which gives a family of lines contained in a $\bP^{n-t}$. However, a general $\bP^{n-t}$, $t \geq 1$, does not intersect the curve $\sC$, hence $M(\sC)\cdot \sigma_{n-t,t+1} = a_t=0$ for $t>0$. The complementary cycle of $\sigma_{n-1,0}$ is $\sigma_{n,1}$, which gives a family of lines contained in a $\bP^n$ passing through a general point. A general hyperplane cuts $\sC$ in $d_1$ points and the lines passing through them are lines of $M(\sC)$ contained in this hyperplane. Hence $M(\sC)$ reduces to $d_1\sigma_{n-1,0}$. The filling family $\cX$ is given by the intersection of $M(F)$ and $M(\sC)$. Pieri's formula gives 
$$M(F) \cdot M(\sC) =d_1d_2(\sigma_{n,0}+ \sigma_{n-1,1}),$$
saying that through a general point of $\bP^{n+1}$ pass $d_1d_2$ lines of $\cX$. By our assumption, $d_1=d_2=1$, i.e. both $\sC$ and $F$ are linear. The proof of the rationality of $\sC$ follows the same lines of \cite[Theorem 0.7]{DePoi}. 
\end{proof}

\section{The locus of non--uniform points}\label{section:nonuniform}
The first part of this section is devoted to the proof of Theorem \ref{thm:main}. We will apply all the focal machinery developed in the previous section to the following family. 

\begin{defn}
Let $\cV$ be the family in $\bG(1,\bP^{n+1})$ composed by the lines $l$ such that one of the following cases occurs:
\begin{itemize}
    \item[(C1)] The line $l$ is bitangent or asymptotic tangent to $X^\sm$;
    \item[(C2)] The line $l$ passes through a point of $X^\sing$ and is tangent to a point of $X^\sm$;
    \item[(C3)] The line $l$ intersects $X^\sing$ in more than one point;
    \item[(C4)] The line $l$ is in the tangent cone to $X$ at a point in $X^\sing$, see Definition \ref{defn:vartanglines}. 
\end{itemize}
If $\sY$ is a variety in $\bP^{n+1}$ we define $\cV_\cY$ the subfamily of $\cV$ of lines through $\cY$.
\end{defn}

Notice that thanks to Lemma \ref{lem:permutations}, the family of lines which generates the monodromy of non--uniform points is contained in $\cV_{\cW(X)}$.

\begin{lem}\label{lem:dimGq}
Let $X$ be a irreducible, reduced hypersurface in $\bP^{n+1}$, and let $Q\in \cW(X)$ be a non--uniform point. Then, the base parametrising the family $\cV_Q$ has dimension $n-1$ in the Grassmannian $\bG(1,n+1)$.
\end{lem}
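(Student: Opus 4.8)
The plan is to realise $\cV_Q$ as a subvariety of the $\bP^n$ of lines through $Q$ and to pin down its dimension by a separate upper and lower bound. Since $Q\notin X$, projection from $Q$ identifies the pencil of lines through $Q$ with the target $\bP^n$ of $\pi_Q$, and under this identification $\cV_Q$ becomes a closed subvariety. For the upper bound, recall that in characteristic $0$ the finite map $\pi_Q$ is generically \'etale, so a general line through $Q$ meets $X$ transversally in $d$ distinct smooth points and misses $\pi_Q(X^\sing)$ (which has dimension at most $n-1$); such a line satisfies none of (C1)--(C4). Hence $\cV_Q$ is a proper closed subset and $\dim\cV_Q\le n-1$.

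For the reverse inequality I would argue by contradiction, assuming $\dim\cV_Q\le n-2$, and reduce to the curve case through a general plane section. Let $H$ be a general $2$-plane through $Q$. By Bertini (Theorem \ref{Bertini1} together with Lemma \ref{Bertini2}) the section $X_H=X\cap H$ is an irreducible curve, and $\pi_Q|_{X_H}\colon X_H\to\bP^1$ is a degree $d$ cover whose branch behaviour is governed by the pencil of lines through $Q$ lying in $H$. This pencil is a general line inside the $\bP^n$ of lines through $Q$, so if $\dim\cV_Q\le n-2$ it is disjoint from $\cV_Q$.

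The heart of the argument is then to show that, under this genericity, every nontrivial local monodromy of $\pi_Q|_{X_H}$ is a transposition. A ramification point of the normalised cover corresponds to a line $l$ of the pencil carrying a point of multiplicity $m_i\ge 2$ upstairs. If that point lies over $X^\sing$, then $l$ is tangent to a local branch and hence lies in the tangent cone (Definition \ref{defn:vartanglines}), i.e.\ $l\in\cV$ by (C4); a higher order of contact or a second tangency puts $l$ in $\cV$ by (C1), and a tangency to $X^\sm$ combined with passage through $X^\sing$ gives (C2). Thus a ramification line escaping $\cV$ is tangent to $X^\sm$ at a single point with contact order exactly $1$ and transverse elsewhere, that is, a simple tangent; by Lemma \ref{lem:permutations} its local monodromy is a transposition. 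The remaining lines of the pencil that meet $X^\sing$ but avoid $\cV$ are transverse to every local branch there, so the normalised cover is unramified over them and contributes trivially.

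Consequently $M(\pi_Q|_{X_H})$ is generated by transpositions, and it is transitive because $X_H$ is irreducible; a transitive subgroup of $S_d$ generated by transpositions is all of $S_d$. By Lemma \ref{lem:section} we have $M(\pi_Q|_{X_H})\le M(\pi_Q)$, whence $M(\pi_Q)=S_d$, contradicting $Q\in\cW(X)$. Therefore $\dim\cV_Q\ge n-1$, and together with the upper bound $\dim\cV_Q=n-1$. I expect the main obstacle to be the case analysis of the previous paragraph: one must verify that the lines of a general pencil which meet $X^\sing$ but escape $\cV$ really do contribute trivially, i.e.\ that staying out of the tangent cone at a singular point forces transversality to every local branch, so that no hidden ramification (and hence no non-transposition generator) sneaks into the monodromy of the general section.
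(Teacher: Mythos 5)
Your route is genuinely different from the paper's. The paper proves the lower bound by induction on $n$: it cuts with a general hyperplane $H$ through $Q$, uses Bertini to keep $X\cap H$ irreducible and reduced, uses (implicitly) Lemma \ref{lem:section} to keep $Q$ non--uniform for the section, and observes that a general hyperplane cut drops the dimension of the base of $\cV_Q$ by exactly one; the whole curve-level difficulty is then outsourced to the base case $n=1$, which is exactly \cite[Proposition 2.5]{PS}. You instead cut all the way down to a pencil with a general $2$-plane and try to re-prove the curve-level statement from scratch, via ``transitive and generated by transpositions implies $S_d$''. Your reduction step and your upper bound $\dim\cV_Q\le n-1$ are fine; the problem is precisely the curve-level claim you attempt to supply yourself.

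The gap is the one you flag in your last paragraph, and it cannot be repaired in the way you hope: avoiding the tangent cone at a singular point does \emph{not} force the normalised cover to be unramified there. If $P$ is a singular point of $X_H$ with a local branch $B$ of multiplicity $\mu$, then a line through $P$ \emph{not} lying in the tangent cone has contact order with $B$ equal to $\mu$ (not $1$), so the pullback to the normalisation acquires a point of multiplicity $m_i=\mu$. For a cuspidal branch ($y^2=x^3$, $\mu=2$) this is harmless, since the resulting cycle is a transposition; but for a unibranch point of multiplicity $\mu\ge 3$ (local equation $y^3=x^4$, normalisation $t\mapsto(t^3,t^4)$, where $y-cx$ pulls back to $t^3(t-c)$) \emph{every} line of the pencil through $P$ contributes a $3$-cycle, and a point with two cuspidal branches contributes cycle type $(2,2,1,\ldots)$. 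Such lines satisfy none of (C1)--(C4): they are not tangent at smooth points, meet $X^\sing$ in a single point, and avoid the tangent cone. So when $X^\sing$ has a codimension-one component whose generic transversal singularity is of this type, your general pencil necessarily contains these lines, $M(\pi_Q|_{X_H})$ is not generated by transpositions, and no contradiction is reached. (A transitive group containing transpositions together with longer cycles need not be $S_d$; imprimitive groups such as $S_2\wr S_{d/2}$ contain transpositions.) Note that the paper's own proof never confronts this issue: the induction step is pure dimension bookkeeping, and whether lines of the above kind are accounted for is a question about the interface between the notion of non--simple tangent lines in \cite{PS} and membership in $\cV_Q$, which the citation of \cite[Proposition 2.5]{PS} leaves implicit.
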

\begin{proof}
For $n=1$, \cite[Proposition 2.5]{PS} guarantees that a non--uniform point must have at least two non--simple tangent lines passing through it. We proceed now by induction. Assume that the claim is true for a hypersurface of $\dim X=n-1$ and prove it for the case $\dim X=n$. By contradiction, assume that the dimension of the base of $\cV_Q$ is smaller than $n-1$. Take a general hyperplane $H$ in $\bP^{n+1}$ passing through $Q$; by Bertini's Theorem, the section $X \cap H$ is irreducible and reduced since $Q \notin X$. The hyperplane $H$ meets the family $\cV_Q$ in a subfamily parametrised by a base of dimension strictly lower than $n-1$, but this contradicts the induction hypothesis.
\end{proof}

\begin{lem} \label{lem:fillingG}
Let $\sC \subset \cW(X)$ be an irreducible curve  not contained in a linear space of codimension $2$. Then the family $\cV_\sC$ is filling.
\end{lem}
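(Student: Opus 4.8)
The plan is to verify directly the two requirements in Definition \ref{defn:fillingfamily}: that the base of $\cV_\sC$ has dimension $n$, and that the map $f$ onto $\bP^{n+1}$ is dominant.

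\emph{Dimension of the base.} First I would introduce the incidence variety
\[
I=\{(Q,l)\ :\ Q\in\sC,\ l\in\cV_Q\}\subseteq \sC\times\bG(1,\bP^{n+1})
\]
and use its two projections. The projection onto $\sC$ is surjective with fibre $\cV_Q$ over $Q$, which has dimension $n-1$ by Lemma \ref{lem:dimGq}; hence $\dim I=1+(n-1)=n$. The projection onto the Grassmannian has image $\cV_\sC$ and fibre $l\cap\sC$ over a line $l$. Since $\sC$ is not contained in a linear space of codimension $2$ and $n\geq 2$, it is not a line (any line lies in some $\bP^{n-1}$), so $l\neq\sC$ and $l\cap\sC$ is finite. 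Therefore this projection is generically finite and $\dim\cV_\sC=n$.

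\emph{Dominance.} Set $Z:=\overline{\bigcup_{l\in\cV_\sC}l}$; I must show $Z=\bP^{n+1}$. For each $Q\in\sC$ the lines of $\cV_Q$ form an $(n-1)$-dimensional subvariety of the $\bP^{n}$ of lines through $Q$, so the cone $C_Q:=\bigcup_{l\in\cV_Q}l$ they sweep out is $n$-dimensional and contained in $Z$. Suppose $Z\neq\bP^{n+1}$, so $\dim Z\leq n$. Then every $n$-dimensional component of $C_Q$ is one of the finitely many $n$-dimensional components of $Z$; as $Q$ ranges over the irreducible curve $\sC$ this assignment must be constant, so $C_Q=W$ for a fixed variety $W$ and general $Q$. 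Being a cone with vertex $Q$ for every general $Q\in\sC$, the variety $W$ is a cone whose vertex is a linear space containing $\sC$, hence containing $\langle\sC\rangle$. The hypothesis gives $\dim\langle\sC\rangle\geq n$. Since the vertex sits inside $W$ and $\dim W\leq n$, the case $\langle\sC\rangle=\bP^{n+1}$ is immediately absurd, while if $\langle\sC\rangle$ is a hyperplane $H$ the inclusions $H\subseteq \mathrm{vertex}(W)\subseteq W$ together with $\dim W=n=\dim H$ force $W=H$.

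\emph{The main obstacle.} The crux is ruling out this last possibility, namely $C_Q=H=\langle\sC\rangle$ for general $Q$, which would say that the $(n-1)$-dimensional family $\cV_Q$ is dense in the family of \emph{all} lines through $Q$ contained in $H$. I would derive a contradiction from the defining conditions (C1)--(C4): a general line through $Q$ inside $H$ meets $X$ only in the proper hypersurface $X\cap H$ (here $\deg X\geq 2$), misses $X^\sing$, and is at most a \emph{simple} tangent to $X^\sm$, so it satisfies none of (C1)--(C4) and does not lie in $\cV$. Hence $\cV_Q$ cannot be dense among the lines through $Q$ in $H$, contradicting $C_Q=H$; this forces $Z=\bP^{n+1}$ and, with the dimension count, shows $\cV_\sC$ is filling. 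The delicate point is precisely this genericity claim: it can fail only when $X^\sing$ has an $(n-1)$-dimensional component lying in $H$ or when $X\cap H$ is non-reduced — exactly the degenerate configuration singled out in Theorem \ref{thm:main} — so one must either invoke the standing hypotheses on $X$ or treat that configuration separately.
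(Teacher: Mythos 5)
Your dimension count is correct, and your reduction of non--dominance to the single statement ``$C_Q=H=\langle\sC\rangle$ for general $Q\in\sC$'' is also correct; the vertex argument (the vertex set of a cone is a linear space, hence contains $\langle\sC\rangle$) is a clean alternative to the paper's route, which instead proves each swept divisor is a hyperplane by taking a general secant line through a point of $\sC$ and using the cone property, and then notes that two or more such hyperplanes would put $\sC$ in a codimension--$2$ linear space. Up to this point the two arguments are essentially parallel.

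The final step, however, is a genuine gap, and you have flagged it without closing it. The genericity claim --- that a general line through $Q$ inside $H$ satisfies none of (C1)--(C4) --- fails exactly in the configurations you mention, plus at least one you omit: the tangent cone at a point of $X^\sing$ may contain $H$, in which case \emph{every} line in $H$ is of type (C4). Moreover, there are no ``standing hypotheses on $X$'' to invoke: the lemma is stated, and is later applied in the proof of Theorem \ref{thm:main}, for an arbitrary irreducible reduced hypersurface, and it is applied precisely in case (C4), where $X^\sing$ and the tangent cones are as degenerate as possible; deferring to Theorem \ref{thm:main} would also be circular, since its proof rests on this lemma. The paper closes this case with a monodromy argument that is insensitive to how degenerate $X$ is along $H$: cut with a general plane $\bP^2\ni Q$, so that $X\cap\bP^2$ is an irreducible reduced curve; since every line of $\cV_Q$ lies in $H$, among the branch points of the projection of $X\cap\bP^2$ from $Q$ at most one --- the image of the single line $H\cap\bP^2$ --- can fail to be a simple branch point, so all other local monodromies are transpositions (Lemma \ref{lem:permutations}); because the product of all local monodromy generators is the identity, the group is generated by those transpositions alone, and a transitive subgroup of $S_d$ generated by transpositions is $S_d$; hence $M(\pi_Q)=S_d$ (using Lemma \ref{lem:section}), contradicting $Q\in\cW(X)$. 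Some argument of this kind, working with the monodromy itself rather than with genericity of lines inside $H$, is exactly what your proof is missing.
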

\begin{proof}
We want to show that $\cV_\sC$ is a family whose base space has dimension $n$ and the map $\cV_\sC \to \bP^{n+1}$ of Definition \ref{defn:fillingfamily} is dominant. For every choice of $Q \in \sC$, the dimension of the base of $\cV_Q$ is $n-1$ thanks to Lemma \ref{lem:dimGq}. Every line in $\cV_\sC$ belongs to the cone $\cV_Q$ for a certain $Q \in \sC$, so
the dimension of the base of $\cV_\sC$ is $n$. 

If the map $\cV_\sC \to \bP^{n+1}$ of Definition \ref{defn:fillingfamily} was not dominant, then the union of all the $\cV_Q$ would be contained a finite union of divisors in $\bP^{n+1}$. For a general $Q \in \sC$, $\cV_Q$ is the union of cones over $V_j \cap X$ with vertex $Q$. Let us consider $V_j:=V$ for a $j \in \{1,\ldots,r\}$. The cone $\cV_Q = \cV_Q'$ for every $Q, Q' \in \sC$ and we will just write $V \cong \cV_Q$ for every $Q \in \sC$. We claim that $V$ is linear. Consider a general line $l$ passing through a general point $T \in \sC$ and not contained in $V$. If $V$ were not a hyperplane, there should be at least a point $Z \in V \cap l,\ Z \neq T$. By hypothesis, $V$ is the cone over $V \cap X$ with vertex $T$. The line $\langle Z, T \rangle$ with $Z \in V$ is contained in $V$. This is a contradiction. Hence $V$ is linear.

As a consequence, the curve $\sC$ must be contained in the intersection of $H_1, \ldots, H_k$, so we have to rule out the case in which $k=1$. A general $\bP^2$ passing through a general point $Q$ of $\sC$ will intersect $X$ in an irreducible, reduced curve. 
The hyperplanes $H_i$ intersect this last $\bP^2$ in lines which, together with some transposition coming from lines outside the $H_i$'s, are generators of the monodromy group $\pi_Q$. Recall that the product of all the generators is the identity: if $k=1$, the monodromy group $\pi_Q$ will be generated just by the transpositions outside the $H_i$'s.  However this can not happen, as the point $Q$ is not uniform. Consequently, there must be at least two generators coming from the $H_i$, hence $k > 1$.
\end{proof}

The problem of finding a bound on the dimension of $\cW(X)$ for planar curves has been completely solved in \cite{PS}:

\begin{thm}\cite[Theorem 3.5, $r=2$]{PS}\label{thm:basecase}
Let $X \subset \bP^2$ be an irreducible curve. Then the locus of non--uniform points is finite.
\end{thm}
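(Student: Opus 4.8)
The plan is to argue by contradiction. Suppose $\cW(X)\subset\bP^2$ is infinite; since it is constructible, it then contains an irreducible curve $\sC$. Because $X$ is a plane curve, its singular locus $X^\sing$ is a finite set of points, and I expect the whole argument to reduce to producing a one-parameter family of tangent lines that violates the focal count of Lemma \ref{lem: degree focal scheme}: here $n=1$, so the focal locus on a general line of a filling family must be a single point. The first ingredient is the group-theoretic fact (\cite[Proposition 2.5]{PS}): a transitive subgroup of $S_d$ generated by transpositions is all of $S_d$, and combining this with the relation that the product of the local monodromies around all branch points is trivial, one sees that a non--uniform point must carry \emph{at least two} non--simple tangent lines, i.e. lines whose associated local monodromy, computed via Lemma \ref{lem:permutations}, is not a transposition. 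Thus to each general $P\in\sC$ I attach $\geq 2$ such lines, which fall into the types (C1)--(C4).

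Next I would show that for a plane curve the types (C2), (C3), (C4) contribute only finitely many lines in total. Indeed, lines through two singular points are finite in number; lines lying in a tangent cone at a singular point are finite, since the tangent cone of a plane curve is a finite union of lines; and lines through a \emph{fixed} singular point that are tangent to $X^\sm$ elsewhere are finite, because from a fixed point there are only finitely many tangent lines to $X$. Discarding the finitely many points of $\sC$ lying on this finite set of lines, a general $P\in\sC$ has all of its $\geq 2$ non--simple lines of type (C1), that is, bitangent or asymptotic (flex) tangent to $X^\sm$.

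I would then assemble these into the family $\cV_\sC$ and check it is a genuine one-dimensional family of lines in $\bP^2$. The incidence correspondence $\{(P,l):P\in\sC,\ l\text{ a type-(C1) line through }P\}$ dominates $\sC$ with fibres of cardinality $\geq 2$, and its image in $\bG(1,\bP^2)$ cannot be finite: finitely many lines would meet the curve $\sC$ in only finitely many points and so could not supply $\geq 2$ lines through a general point of $\sC$. Since any one-dimensional family of distinct lines in $\bP^2$ sweeps out a two-dimensional locus, $\cV_\sC$ is filling in the sense of Definition \ref{defn:fillingfamily}. Moreover the tangency points of its members cannot be confined to finitely many points of $X$, because the tangent line at a fixed smooth point is unique; hence the general member is tangent to $X$ at a general point and Lemma \ref{lem:focalnewtuttoinsieme} applies. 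A general $l\in\cV_\sC$ is then either bitangent, tangent at two general smooth points each of which is a focus on $l$, or asymptotic tangent at a point of contact order $\geq 2$, which is therefore a focus of multiplicity $\geq 2$. In either case the focal locus on the general line has degree $\geq 2$, contradicting Lemma \ref{lem: degree focal scheme}, which for $n=1$ forces it to be a single point. Hence $\cW(X)$ contains no curve and is finite.

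The main obstacle, I expect, is the bookkeeping needed to guarantee that a positive-dimensional family of a single type (C1) actually survives: one must verify that the $\geq 2$ non--simple lines attached to a moving point cannot all collapse into the finitely many special lines of types (C2)--(C4), and that the tangency points genuinely vary over $X$ so that the hypothesis ``tangent at a general point'' in Lemma \ref{lem:focalnewtuttoinsieme} is met. Once the family is recognised as a genuine one-parameter filling family of bitangent or flex lines, the numerical clash ``focal degree $\geq 2$ versus $=1$'' closes the argument.
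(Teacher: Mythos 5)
The paper offers no proof for you to be compared against: Theorem \ref{thm:basecase} is quoted verbatim from \cite[Theorem 3.5]{PS}, and it enters the paper only as the $n=1$ input (together with \cite[Proposition 2.5]{PS}, used in Lemma \ref{lem:dimGq}) for the higher-dimensional arguments. What you have written is, in effect, the specialisation to $n=1$ of the paper's own proof of Theorem \ref{thm:main}: the same case division (C1)--(C4), the same filling-family construction, and the same clash between Lemma \ref{lem:focalnewtuttoinsieme} (two foci, or one focus of multiplicity two, at the tangency points) and Lemma \ref{lem: degree focal scheme} (exactly $n=1$ focus on a general line of a filling family). For plane curves this strategy does close up, because cases (C2)--(C4) contribute only finitely many lines, so the overall scheme of your argument is sound and is faithful to the spirit of the paper, if not to its literal treatment of this statement.

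Two further points. First, a concrete gap: the step ``discarding the finitely many points of $\sC$ lying on this finite set of lines'' is false when $\sC$ is itself one of the special lines --- for instance a line through two points of $X^\sing$, or a bitangent line --- which is not excluded by the hypothesis $\sC\subset\cW(X)$. In that case \emph{every} point of $\sC$ lies on a special line, and your incidence argument (``finitely many lines cannot supply $\geq 2$ lines through a general point of $\sC$'') breaks, since the single line $\sC$ does pass through all of its own points. The repair is short: two distinct lines of $\bP^2$ meet in one point, so through a general $P\in\sC$ at most one non--simple line is fixed (namely $\sC$ itself, if $\sC$ is a line); hence at least one of the $\geq 2$ non--simple lines at $P$ varies with $P$, is necessarily of type (C1) and different from $\sC$, and these lines still form an infinite, hence filling, family to which your focal count applies. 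You flagged this collapse issue as ``bookkeeping'', but as written the discarding sentence is incorrect, not merely unverified. Second, the machinery is heavier than the plane case requires: in $\bP^2$ the type (C1) lines are also finite in number, since in characteristic $0$ the Gauss map of an irreducible curve is birational onto the dual curve, so bitangents and inflectional tangents correspond to finitely many special points of that curve. Granting this classical fact, \cite[Proposition 2.5]{PS} finishes the proof at once: every non--uniform point lies on at least two lines from one fixed finite set, hence in the finite union of their pairwise intersections. Your focal argument is in effect a re-proof of that finiteness statement; it is valid, but it is not the shortest route in the plane.
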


Notice that by subsequently taking general hyperplane sections (see Lemma \ref{lem:section}), Theorem \ref{thm:basecase} implies that the codimension of $\cW(X)$ must be at least $2$. We want to prove that this locus of codimension at least $2$ must also be contained in a finite union of linear spaces. We are now ready for the proof of the main result of this paper.

\begin{proof}[Proof of Theorem \ref{thm:main}]
Let us assume that there exists a component of $\cW(X)$ not contained in a linear space of codimension $2$. Consider an irreducible curve $\sC \subset \cW(X)$ with the same property. We now want to apply the focal machinery to the family $\cV_\sC$. The hypothesis of Lemma \ref{lem:fillingG} are satisfied, so we know that $\cV_\sC$ is filling. Let us proceed with a case by case analysis.

\medskip
\textbf{The general element of $\cV_\sC$ belongs to Case (C1)}.
We claim that the general $l$ is tangent to a general point of $X$. Indeed, if it would not be true, there would be a divisor $Y \subset X^{\sm}$ such that the family $\cV_\sC$ of lines tangent to $X$ would be the join of $Y$ and $\sC$. Therefore, $\sC$ would be contained in the tangent hyperplane $T_yX$ for every $y \in Y$. Reasoning as in Lemma \ref{lem:fillingG}, $\sC$ would be contained in a linear space of codimension $2$, that is a contradiction. 

If $l$ is an asymptotic tangent line to a general point $P\in X$, then it is a focal point with multiplicity $2$; if $l$ is bitangent at two distinct points then both of them are focal points for $l$ (see Lemma \ref{lem:focalnewtuttoinsieme}). 

\medskip
\textbf{The general element of $\cV_\sC$ belongs to Case (C2)}.

As for the previous case, we can assume that the point in $l$ is tangent to a general point in $X^\sm$, so the tangency point is a focus by Lemma \ref{lem:focalnewtuttoinsieme}, while a point in $l \cap X^\sing$ is focal by Lemma \ref{lem:fundpoint}.  
     
\medskip   
\textbf{The general element of $\cV_\sC$ belongs to Case (C3)}. The family $\cV_\sC$ consists of lines passing through at least two points of $X^\sing$ and intersecting $\cW(X)$. Hence, there is a one--dimensional subfamily of lines of $\cV_{\sC}$ through every point in $ X^\sing$.
By Lemma \ref{lem:fundpoint}, the points in $l \cap X^\sing$ are focal points for $l$, each of multiplicity $1$.

\medskip
In each of the previous cases, the focal locus of $l$ has multiplicity at least $2$ in points where $l$ is tangent to $X$. Moreover, $l$ passes through a point of $\sC$ and, by construction, for every such point there is a $n-1$ dimensional subfamily of $\cV_\sC$. Therefore, this point is a focus for $l$ as well, its multiplicity being $n-1$ by Lemma \ref{lem:fundpoint}. Note that the general line meets $\sC$ in a point outside $X$. 
Thus, we have at least $n+1$ focal points in a general line $l$ of the filling family $\cV_\sC$. 
But this is a contradiction because of Lemma \ref{lem: degree focal scheme}, as the focal locus in a general line of a filling family of lines in $\bP^{n+1}$ consists of $n$ points counted with multiplicity. 
\medskip

\textbf{The general element of $\cV_\sC$ belongs to Case (C4)}. Having found a contradiction in all the previous cases, we can assume that (C4) is the only case occurring for a general element of $\cV_\sC$.
In this case, a general $l \in \cV_\sC$ belongs to the tangent cone to $X$ at a point $x$ in $X^\sing$. As a consequence, $\sC$ must be contained in the intersection of all the tangent cones to points in $X^\sing$. The family $\cV_\sC$ is compose by lines joining $\sC$ and $X^\sing$. Recall that we are still assuming $\sC$ not contained in a linear space of codimension $2$, hence $\cV_\sC$ is filling.
By applying Lemma \ref{lemma:depoi}, we get that $\sC$ is rational, $X^\sing$ is linear and meets $\sC$ in $\deg(\sC)-1$ points. There could be only a finite number of curves $\sC \subset \cW(X)$ satisfying these properties. Therefore, the dimension of $\cW(X)$ must be at most $1$. The image of $X^\sing$ under the projection from a point is a linear subspace of $\bP^n$ and, by Lemma \ref{lem:permutations} gives generators of the monodromy group of the projection which are not transpositions. Moreover, we are assuming to be only in the case (C4), so $X^\sing$ must split in at least two linear spaces, in order to have a non--uniform monodromy. 
\end{proof}

We remark that the only obstruction to prove that $\cW(X)$ is always contained in a linear space of codimension $2$ comes from the case (C4). Notice that the case of $\cW(X)$ being a finite union of linear space of codimension $2$ can actually happen, as shown in the following example.

\begin{exa}\label{ex:codim2}
Fix an irreducible and reduced curve $C \subset \bP^{n+1}$ contained in a plane $H \cong \bP^2$ and consider a linear space $V$ of dimension $n-2$ disjoint from $H$. Let $X$ be the cone on $C$ with vertex $V$. Assume that $Q$ is a non--uniform point for $C$ in $H$. We claim that every point in the line $\langle Q, V \rangle$ is non--uniform for $X$: just notice that the linear projection from the vertex induces an isomorphism from the general $\bP^2$ to $H$ which sends $X \cap \bP^2$ to $C$. In a similar flavour, cones provide an example of irreducible, reduced $X \subset \bP^{n+1}$ with $\cW(X)$ being a finite union of $\bP^k$, $k=1, \ldots, n-1$. 
\end{exa}

As a consequence of the main theorem, we are able to prove the finiteness of $\cW(X)$ for many hypersurfaces. As a generalisation of Theorem 1.1 in \cite{CMS} we have Theorem \ref{thm:smooth}. We prove it more generally by obtaining the analogous of \cite[Theorem 3.5]{PS} in higher dimension. Consider a smooth, irreducible variety $\tilde{X}$ of dimension $n$ in $\bP^{n+c}$, $c \geq 1$. We can define the monodromy group $M(\pi_L)$ associated with the projection from a linear space $L \in \bG(c-1, \bP^{n+c})$. Our next step is to study the dimension of $\cW(\tilde X)$.
We start with projecting $\tilde X$ to a hypersurface $X \subset \bP^{n+1}$, and to study $\cW(X)$. 

\begin{prop}\label{prop:genproj}
Let $\tilde X$ be a smooth irreducible projective variety of dimension $n$ in $\ \bP^{n+c}$ and $X$ the projection of $\tilde X$ from a general linear subspace $M\subset \bP^{n+c}$ of dimension $c-2$. Then, the locus $\cW(X)$ is at most finite.
\end{prop}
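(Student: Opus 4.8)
The plan is to derive finiteness from Theorem~\ref{thm:main} once Mather's theorem has been used to pin down the singularities of the general projection $X$. First I would record that for a general centre $M$ of dimension $c-2$ the map $\pi_M|_{\tilde X}\colon \tilde X\to X$ is birational, so $\tilde X$ is the normalisation of $X$ and the monodromy of $\pi_P$ on $X$ coincides with that of the projection of $\tilde X$ from $\langle M,P\rangle$. Applying Theorem~\ref{thm:Mather} with $t=c-2$, all the strata $X_{i_1,\dots,i_k}$ are smooth of positive codimension; hence the singularities of $X$ are precisely those produced by a general projection. In particular $X$ is not a cone and $X^\sing$ is not a union of two or more linear subspaces isomorphic to $\bP^{n-1}$. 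We are therefore in the first alternative of Theorem~\ref{thm:main}, so $\cW(X)$ is contained in a finite union of linear spaces $\Lambda_1,\dots,\Lambda_r$ of codimension $2$.

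It then suffices to exclude that some $\Lambda_i$ meets $\cW(X)$ in positive dimension. Suppose, for a contradiction, that $\dim\cW(X)\ge 1$. Since $\cW(X)\subseteq\bigcup_i\Lambda_i$ with each $\Lambda_i$ irreducible, I may choose an irreducible curve $\sC\subseteq\cW(X)$ lying in a single $\Lambda_{i_0}\cong\bP^{n-1}$. As independent confirmation that only curves need to be treated, one can cut with a general hyperplane $H$: by Lemma~\ref{lem:section} we have $\cW(X)\cap H\subseteq \cW(X_H)$, and $X_H$ is itself the projection of the smooth variety $\tilde X\cap\pi_M^{-1}(H)$ from a general $\bP^{c-2}$, so an induction on $n$ with base case Theorem~\ref{thm:basecase} already gives $\dim\cW(X)\le 1$. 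The aim is to contradict the existence of such a curve $\sC$.

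I would then study the family $\cV_\sC$. Were $\cV_\sC$ not filling, the argument of Lemma~\ref{lem:fillingG} would confine the lines of $\cV_\sC$ through the moving point of $\sC$ to a fixed set of hyperplanes containing $\langle\sC\rangle$, forcing the corresponding sections of $X$ to be cones with vertex containing $\langle\sC\rangle$; this is incompatible with $X$ being a general projection of a smooth variety. Hence $\cV_\sC$ is filling, and I can reuse the focal count from the proof of Theorem~\ref{thm:main}. In cases (C1)--(C3) a general line $l\in\cV_\sC$ carries at least two foci arising from tangency (Lemma~\ref{lem:focalnewtuttoinsieme} and Lemma~\ref{lem:fundpoint}) together with a focus of multiplicity $n-1$ at the point $l\cap\sC$, which lies outside $X$ (Lemma~\ref{lem:dimGq} and Lemma~\ref{lem:fundpoint}); this yields at least $n+1$ foci on $l$ and contradicts Lemma~\ref{lem: degree focal scheme}. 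Case (C4) would force $X^\sing$ to be linear by Lemma~\ref{lemma:depoi}, against the first step. Every case is thus excluded, and $\cW(X)$ must be finite.

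The main obstacle is the passage from the genericity of $M$ to the two structural inputs used above: that $X$ is not a cone and that $X^\sing$ has no linear $\bP^{n-1}$ component, together with the closely related claim that the non-filling alternative cannot occur for a general projection. Turning Theorem~\ref{thm:Mather} into quantitative control of the double-point locus and of the tangential behaviour of $X$ is the real technical work; once this is in place, the focal count is a direct adaptation of the proof of Theorem~\ref{thm:main}.
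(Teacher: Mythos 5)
There is a genuine gap, and it sits exactly at the two points you yourself defer as ``the real technical work''; unfortunately, without them nothing remains of the proof.

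First, your claim that Theorem~\ref{thm:Mather} implies $X^\sing$ is not a union of two or more linear spaces isomorphic to $\bP^{n-1}$ is unjustified and in fact false: the general projection of the Veronese surface in $\bP^5$ from a general line is the Steiner quartic in $\bP^3$, whose singular locus is three concurrent double lines, i.e.\ precisely a union of several copies of $\bP^{n-1}$ (here $n=2$). So you cannot place yourself in the first alternative of Theorem~\ref{thm:main}. What the paper actually extracts from Mather's theorem is different: at a general point of $X^\sing$ the tangent cone is a finite union of hyperplanes, so that even in the exceptional case of Theorem~\ref{thm:main} --- where $\cW(X)$ lies in the intersection of tangent cones at points of $X^\sing$ --- the locus $\cW(X)$ is still trapped in a finite union of codimension $2$ linear spaces.

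Second, and this is the fatal part: once your curve $\sC$ lies in a codimension $2$ linear space $\Lambda_{i_0}$, the focal machinery you invoke no longer applies. Lemma~\ref{lem:fillingG} explicitly assumes $\sC$ is \emph{not} contained in a codimension $2$ linear space, and its proof derives its contradiction precisely from such a containment; when $\sC \subseteq \Lambda_{i_0}$ no contradiction arises and $\cV_\sC$ may genuinely fail to be filling --- this is exactly what happens for cones (Example~\ref{ex:codim2}), where $\cW$ is a positive-dimensional union of curves inside codimension $2$ spaces. Likewise Lemma~\ref{lemma:depoi} requires $\sC$ not contained in a $\bP^{n-1}$, which your $\sC$ violates. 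Your one-sentence patch --- that non-fillingness is ``incompatible with $X$ being a general projection'' --- is an assertion of essentially the proposition itself, not a proof: smoothness of $\tilde X$ and genericity of $M$ never actually do any work in your argument. The paper's proof of this step is entirely different and uses no focal loci: setting $K=\langle \sC\rangle\cong\bP^k$, it considers the family of $(k+1)$-planes $H\supset K$, shows via Theorem~\ref{thm:main} that the general section $X\cap H$ would have to be \emph{reducible} (since it contains the curve $\sC$ spanning a hyperplane of $H$), transports this to reducibility of $\tilde X\cap\langle H,M\rangle$, and then contradicts the Bertini-type Lemma~\ref{Bertini2} unless the base locus $\langle K,M\rangle\cap\tilde X$ is tangential, which Theorem~\ref{thm:Mather} excludes for $M$ general. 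Without an argument of this kind, curves inside the $\Lambda_i$ are not excluded, and finiteness does not follow.
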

\begin{proof}
Assume $\cW(X)$ is not finite and let $\sC$ be one of its components. Theorem \ref{thm:Mather} implies that the tangent cone to $X$ at a general point in $X^\sing$ is a finite union of hyperplanes.  By following the proof of Theorem \ref{thm:main} we have that $\cW(X)$ is always contained in the intersection of two different tangent cones to $X$. As a consequence, in this case $\sC$ must always be contained in a linear space of codimension $2$. 

Denote by $K\cong \bP^k$ the smallest linear subspace of $\bP^{n+1}$ containing $\sC$ and consider the family of hyperplanes $H\cong\bP^{k+1}$ containing $K$. We claim that the general $H$ in this family cuts $X$ in a reducible hypersurface. If that were not true, $\cW(X \cap H)$ would span a space of codimension $1$ in $H$, which contradicts Theorem \ref{thm:main}. Notice that $X \cap H$ cannot be non--reduced for a general $H$ by Bertini's Theorem \ref{Bertini1}, since we assumed $X$ to be non--reduced. 

Notice that $X \cap H$ is the linear projection from $M$ of $\tilde{X} \cap \langle H, M\rangle$. The variety $\tilde{X} \cap \langle H, M\rangle$ must be reducible as well as the projection is a continuous map. So $\langle H, M\rangle$ gives a family of hyperplanes $\bP^{k+c-1}$ whose general member cuts $\tilde X$ in a reducible variety. The base locus in $\tilde X$ of this family is obtained by intersecting $\tilde X$ with $\Gamma:= \langle K, M\rangle$. We are assuming that the general section on $\tilde X$ obtained from a general element of this linear system is reducible, thus contradicting Lemma \ref{Bertini2}. Therefore, the base locus must consist of singular points. Since $\tilde X$ is smooth, the only possibility is that $\Gamma$ is tangent to $\tilde X$. This contradicts Theorem \ref{thm:Mather}: as we assumed $M$ general, $\Gamma \cap \tilde X$ is contained in one of the smooth varieties $X_{i_1, \ldots, i_k}$ (in the notation of Theorem \ref{thm:Mather}).
\end{proof}

As a consequence of this proposition, we can reason as in \cite[Theorem 3.5]{PS} and show the following

\begin{prop} \label{prop:smooth}
Let $\tilde{X}$ be a smooth irreducible complex projective variety of dimension $n$ in $\bP^{n+c}$, $c \geq 1$. The locus of non--uniform $(c-1)$-planes $L$ not intersecting $\tilde{X}$ has codimension at least $n+1$ in the Grassmannian $\bG(c-1,\bP^{n+c})$. 
\end{prop}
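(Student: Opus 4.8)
The plan is to bound the dimension of the non--uniform locus inside the Grassmannian by transferring Proposition \ref{prop:genproj} along an incidence correspondence relating each $(c-1)$-plane to the $(c-2)$-planes it contains. Write $\Sigma\subset\bG(c-1,\bP^{n+c})$ for the locus of non--uniform $(c-1)$-planes disjoint from $\tilde X$. Since $\dim\bG(c-1,\bP^{n+c})=c(n+1)$, the asserted codimension bound is equivalent to $\dim\Sigma\le (c-1)(n+1)$, and I would prove this inequality directly. Note that the base case $c=1$ is exactly Theorem \ref{thm:smooth}: there $M$ is empty, $\tilde X=X$ is already a hypersurface, and Proposition \ref{prop:genproj} gives finiteness at once.

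The first step is the key reduction. Fix a $(c-2)$-plane $M$ disjoint from $\tilde X$ and let $\pi_M\colon\tilde X\to X_M\subset\bP^{n+1}$ be the projection from $M$, with $X_M$ the image hypersurface. The $(c-1)$-planes $L$ containing $M$ are exactly the closures of the fibres of the rational map $\bP^{n+c}\dashrightarrow\bP^{n+1}$ determined by $M$, so $L\mapsto P:=\pi_M(L)$ is a bijection between such planes and points of $\bP^{n+1}$, under which $\pi_L$ factors as $\pi_P\circ\pi_M$. For $M$ general the projection $\pi_M$ is birational onto $X_M$, its singularities being controlled by Theorem \ref{thm:Mather}; then $\pi_L$ and $\pi_P$ share the same generic fibre and hence the same monodromy group, so $L$ is non--uniform if and only if $P\in\cW(X_M)$. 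By Proposition \ref{prop:genproj} the locus $\cW(X_M)$ is finite for general $M$, so only finitely many $(c-1)$-planes through a general $M$ are non--uniform.

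Next I would run the dimension count on the incidence variety
\[
I=\{(M,L)\ :\ M\subset L,\ L\in\Sigma\}\subset\bG(c-2,\bP^{n+c})\times\Sigma .
\]
The projection to $\Sigma$ has fibres $\bG(c-2,\bP^{c-1})$ of dimension $c-1$, whence $\dim I=\dim\Sigma+(c-1)$. The projection to $\bG(c-2,\bP^{n+c})$, whose dimension is $(c-1)(n+2)$, has finite general fibre by the previous step, so $\dim I\le (c-1)(n+2)$. Combining the two estimates gives $\dim\Sigma\le (c-1)(n+2)-(c-1)=(c-1)(n+1)$, the desired bound, and therefore $\codim\Sigma\ge n+1$.

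The main obstacle is precisely the generality hypothesis feeding the second projection: the fibre-finiteness supplied by Proposition \ref{prop:genproj} holds only for $M$ in a dense open $U\subset\bG(c-2,\bP^{n+c})$ (where $\pi_M$ is birational and $X_M$ has the singularity structure of Theorem \ref{thm:Mather}), whereas to bound $\dim I$ one must read off finiteness at a general point of the \emph{image} of the second projection. I would therefore need to show that the image of $I$ is not swept into the complement of $U$, i.e. control any component of $I$ lying over the ``bad'' centres — non--birational projection centres, or planes $\Gamma=\langle K,M\rangle$ tangent to $\tilde X$. These bad loci are carved out by the degeneracy conditions of Theorem \ref{thm:Mather} and have positive codimension, so I expect the estimate to persist; making this generality argument precise, rather than the numerology, is where the real work lies.
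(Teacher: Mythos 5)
Your reduction is exactly the paper's: the case $c=1$ is Proposition \ref{prop:genproj}, and for $c\geq 2$ the paper likewise projects from a general $(c-2)$-plane $M$, identifies the $(c-1)$-planes $L\supset M$ with the points of a $\bP^{n+1}_M$, uses that $\pi_L$ and the projection of $X=\pi_M(\tilde X)$ from the corresponding point have the same monodromy, and invokes $\dim\cW(X)\leq 0$. The divergence is in the endgame. The paper argues by contradiction: if the non--uniform locus had codimension at most $n$, it would contain an irreducible $D$ with $\dim D+(n+1)>\dim\bG(c-1,\bP^{n+c})$, and it then asserts --- this is its entire remaining argument --- that for a \emph{general} $M$ the intersection $D\cap\bP^{n+1}_M$ is at least one--dimensional, which contradicts Proposition \ref{prop:genproj} because a general $M$ is automatically a good centre. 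In your language, that assertion is precisely the dominance of your second projection $I\to\bG(c-2,\bP^{n+c})$, i.e.\ the point you defer as ``where the real work lies''. So your proposal is the paper's proof with its one decisive step left open rather than performed, and this is a genuine gap, not a presentational difference.

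Two remarks on that missing step. First, your proposed mechanism for closing it is off target: the positive codimension of the bad centres coming from Theorem \ref{thm:Mather} would not suffice by itself, since a whole component of $I$ could still lie over the bad locus, and in the paper's formulation the bad locus never appears at this stage at all (it is confined to the proof of Proposition \ref{prop:genproj}), precisely because only general $M$ are ever considered there. Second, the problematic case is narrower than you suggest: under the contradiction hypothesis one has $\dim I>\dim\bG(c-2,\bP^{n+c})$, so for any component of $I$ whose image merely \emph{meets} the good open set $U$ (no dominance needed) finiteness of the fibre at a general point of the image already yields $\dim\Sigma\leq (c-1)(n+1)$; the only case to exclude is a component of $\Sigma$ \emph{all} of whose $(c-2)$-subplanes are bad centres, equivalently non--dominance of $I\to\bG(c-2,\bP^{n+c})$ for that component. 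Note finally that this exclusion is real mathematical content and not bookkeeping: in a Grassmannian, unlike in $\bP^N$, subvarieties of complementary-plus dimension can fail to meet (the lines lying in a fixed hyperplane of $\bP^4$ never pass through a general point, although $4+3>6$), so non-emptiness of $D\cap\bP^{n+1}_M$ for general $M$ does not follow from the dimension count alone; this is exactly the step on which the paper's own argument rests, and exactly the place where your attempt stops.
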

\begin{proof}
We will follow the proof of \cite[Theorem 3.5]{PS}. When $c=1$ we know from Proposition \ref{prop:genproj} that all but finitely many points $P\in \bP^{n+1} \smallsetminus \tilde{X}$ are uniform.
Now assume $c \geq 2$. After projecting from a general $(c-2)$-subspace $M$, we get $X \subset \bP^{n+1}_M$, where $\bP^{n+1}_M$ parametrises all the ($c-1$)-planes $L$ containing $M$. Notice that projecting $\tilde{X}$ to $\bP^n$ from $L$ is equivalent to projecting $X$ to $\bP^n$ from the point in $\bP^{n+1}_M$ corresponding to $L$. Proposition \ref{prop:genproj} applied to $X$ gives $\dim \cW(X) \leq 0$.

Assume by contradiction that $\cW(\tilde X)$ has codimension at most $n$ in the Grassmannian $\bG(c-1,\bP^{n+c})$. In this case there would be an irreducible subvariety $D$ of codimension at most $n$ such that the general $L \in D$ is non--uniform, i.e. every $L \in D \smallsetminus Z$ is non--uniform for a proper Zariski closed subset $Z$. 
We claim that for a general element $M \in \bG(c-2,\bP^{n+c})$, the dimension of $(D \smallsetminus Z) \cap \bP^{n+1}_M$ is greater than zero. Notice first that $D \cap \bP^{n+1}_M$ is at least one-dimensional: $D$ has codimension at most $n$ in $\bG(c-1,\bP^{n+c})$ and $\bP^{n+1}_M$ is $n+1$ dimensional. 
Secondly, we have that
$$ \dim (Z \cap \bP^{n+1}_M)< \dim(D \cap \bP^{n+1}_M).$$ 
This implies that there exist infinitely many non--uniform such planes $L$ containing a general $M\in \bG(c-2,\bP^{n+c})$, but this would give $\dim(X)>0$, which contradicts Proposition \ref{prop:genproj}. 
\end{proof}

\begin{rmk} \label{rmk:boundsharp}
The same argument of \cite[Remark 3.6]{PS} shows that the bound in Proposition \ref{prop:smooth} is sharp. There are singular 
varieties $X$ for which there exist points $x\notin X$ such that the projection $\pi_x:X \subset \bP^{n+c} \to \bP^{n+c-1}$ is non-birational onto the image. If this is the case, a $(c-1)$-plane $L$ containing such an $x$ is non--uniform because the map $\pi_L$ factorises in a non-trivial way. Thus, the family of the $(c-1)$-planes passing through $x$ is a family consisting of non--uniform elements in $\bG(c-1,\bP^{n+c})$ of codimension $n+1$.
\end{rmk}

We now focus on the case in which $\cW(X)$ is not finite. Transpositions play a fundamental role in determining if a point is uniform or not, see \cite[Remark 2.2]{PS}. This motivates Theorem \ref{thm:transpositionnotcone}, in which as in the case (C1) of \ref{thm:main}, we apply Lemma \ref{lem:focalnewtuttoinsieme} to show that if $\dim \cW(X)>0$ and $X$ is not a cone, then the monodromy group associated with all but finitely many points of $\cW(X)$ contains transpositions.

\begin{proof}[Proof of Theorem \ref{thm:transpositionnotcone}]
Consider the family $\cX$ of lines in $\bP^{n+1}$ tangent to $X$ at smooth points and passing through a curve $\sC$ inside $\cW(X)$.

In order to prove that $\cX$ is filling, notice that $\cX$ is composed by lines lying on hyperplanes tangent to $X$ and passing through $\sC$. Let $X^* \subset (\bP^{n+1})^*$ be the dual variety of $X$; let $r$ be the dimension of $X^*$. If $X$ is not a cone, by \cite[Theorem 1.25]{ProjDual} we have that $X^*$ is not contained in a hyperplane. Consider the family of hyperplanes in $(\bP^{n+1})^*$ dual to the points of $\sC$. 
The general hyperplane of this family intersects $X^*$ in a locus of dimension $r-1$. Moreover, every point of $X^*$ is contained in one of such hyperplanes, providing that the tangent hyperplane to $X$ at a general point pass through the general point of $\sC$.
As a consequence, for the general $Q \in \sC$ we have a $r-1$ dimensional family of tangent hyperplanes to $X$. 
The general member of this family is tangent to $X$ along a subvariety of dimension $n-r$. Every line joining $Q$ and this $n-r$ subvariety is tangent to $X$. 

This is enough to prove that the family $\cX$ is filling: the dimension of the family is the right one. The map $\cX \to \bP^{n+1}$ is dominant because the hyperplanes tangent to $X$ passing trough $\sC$ are tangent at the general point of $X$, so $\cX$ cannot degenerate as in the proof of Lemma \ref{lem:fillingG}. 

If we assume that the general line of $\cX$ is not simply tangent to $X$, we get a contradiction by using Lemma \ref{lem: degree focal scheme}, as in the proof of Theorem \ref{thm:main}. As a consequence, if $X$ is not a cone, we can find simple tangent lines to $X$ passing through all but finitely many points of $\cW(X)$. Such lines correspond to transpositions in the monodromy group. 
\end{proof}

From Theorem \ref{thm:transpositionnotcone} we get  Proposition \ref{prop:analogogalois}, that is a generalization of \cite[Proposition 6]{FT} on Galois points.

\begin{proof}[Proof of Proposition \ref{prop:analogogalois}] 
Following the notation of \cite{FT}, denote by $\Delta'(X)$ the locus of Galois points associated with $X$. Clearly, $\Delta'(X) \subset \cW(X)$. If we assume $\Delta'(X)$ to be an infinite set and $X$ not a cone, Theorem \ref{thm:transpositionnotcone} shows the existence of transpositions in the monodromy group associated with a general point $Q$ in $\Delta'(X)$. As a consequence, the field extension given by $\pi_Q$ is not Galois, and this contradicts our initial hypothesis.
\end{proof}

Recall that if $M(\pi_P)$ is isomorphic to the full symmetric group then the projection $\pi_P$ is indecomposable. The converse also holds if we require $M(\pi_P)$ to contain a transposition (see \cite[Remark 2.2]{PS}). Hence we have Proposition \ref{Prop:decomposablenoncone}.

\begin{proof}[Proof of Proposition \ref{Prop:decomposablenoncone}]
Let $Q$ be a general point in $\cW(X)$. By Theorem \ref{thm:transpositionnotcone} the monodromy group $M(\pi_Q)$ contains a transposition. Hence the projection $\pi_Q$ must be a decomposable map.
\end{proof}

\begin{rmk}
Notice that this is enough to prove Conjecture \ref{conj:coni} 
for all hypersurfaces $X$ having prime degree. Indeed, $\pi_P:X \to \bP^n$ is indecomposable for every $P \notin X$ because otherwise, the degree of an intermediate, not birational map would divide $d$. 
\end{rmk}

There are two classes of hypersurfaces which could potentially provide a counterexample for Conjecture \ref{conj:coni}: \\
\textbf{Type-1:} Hypersurfaces $X$ in $\bP^{n+1}$, where every component of $X^\sing$ in codimension $1$ is linear, and such that the intersection of all the tangent cones at points in $X^\sing$ is a finite union of rational curves.
\\
\textbf{Type-2:} Hypersurfaces $X$ in $\bP^{n+1}$ such that there exists a $\bP^k$ ($0<k<n$), where $X \cap \bP^{k+1}$ is reducible for every $\bP^{k+1} \supset \bP^k$. Cones are a particular case of $X$ of Type-2.

\begin{rmk} \label{rem:types}
We have that if $X$ is neither of Type-1 nor of Type-2, then $\cW(X)$ is finite.
The proof mimics the steps of Proposition \ref{prop:genproj}: if $X$ is not of Type-1 we can apply Theorem \ref{thm:main} to get that that $\cW(X) \subset \bP^{n-1}$; if $X$ is not of Type-2 either, we can apply Lemma \ref{lem:section}.
\end{rmk}

While examples of hypersurfaces of Type-1 and Type-2 do exist, we were not able to find any variety with an infinite number of non--uniform points that is not a cone. We plan to study these cases in a future work.

\section*{Acknowledgements}
R.M. is supported by MIUR: Dipartimenti di Eccellenza Program (2018-2022) - Dept. of Math. Univ. of Pavia.
We would like to thank Gian Pietro Pirola for introducing us to the problem and the different techniques involved; we also thank him for all the help he gave us during the preparation of this paper. We are also grateful to Ciro Ciliberto for constructive discussions about how to generalise some result concerning focal loci to a higher dimension. We benefit from many helpful discussions with Thomas Dedieu, Enrico Schlesinger and Lidia Stoppino.

\bibliographystyle{alpha}
\bibliography{biblio}

\begin{thebibliography}{BCDP14}

\bibitem[ABO01]{abo}
A.~Alzati, E.~Ballico, and G.~Ottaviani.
\newblock The theorem of {M}ather on generic projections for singular
  varieties.
\newblock {\em Geom. Dedicata}, 85(1-3):113--117, 2001.

\bibitem[AO92]{ao}
A.~Alzati and G.~Ottaviani.
\newblock The theorem of {M}ather on generic projections in the setting of
  algebraic geometry.
\newblock {\em Manuscripta Math.}, 74(4):391--412, 1992.

\bibitem[BCDP14]{BastianelliCortiniDePoi}
F.~Bastianelli, R.~Cortini, and P.~De~Poi.
\newblock The gonality theorem of {N}oether for hypersurfaces.
\newblock {\em J. Algebraic Geom.}, 23(2):313--339, 2014.

\bibitem[BCFS19]{BCFS}
F.~Bastianelli, C.~Ciliberto, F.~Flamini, and P.~Supino.
\newblock Gonality of curves on general hypersurfaces.
\newblock {\em J. Math. Pures Appl. (9)}, 125:94--118, 2019.

\bibitem[CF11]{CF}
C.~Ciliberto and F.~Flamini.
\newblock On the branch curve of a general projection of a surface to a plane.
\newblock {\em Trans. Amer. Math. Soc.}, 363(7):3457--3471, 2011.

\bibitem[CMS17]{CMS}
A.~Cuzzucoli, R.~Moschetti, and M.~Serizawa.
\newblock Non-uniform projections of surfaces in {$\mathbb{P}^3$}.
\newblock {\em Le matematiche}, LXXII, 2017.

\bibitem[DP01]{DePoi}
P.~De~Poi.
\newblock On first order congruences of lines of {$\mathbb P^4$} with a
  fundamental curve.
\newblock {\em Manuscripta Math.}, 106(1):101--116, 2001.

\bibitem[DP04]{Depoi2}
P.~De~Poi.
\newblock Congruences of lines with one-dimensional focal locus.
\newblock {\em Port. Math. (N.S.)}, 61(3):329--338, 2004.

\bibitem[FT14]{FT}
S.~Fukasawa and T.~Takahashi.
\newblock Galois points for a normal hypersurface.
\newblock {\em Trans. Amer. Math. Soc.}, 366(3):1639--1658, 2014.

\bibitem[Fuk13]{FukCom}
S.~Fukasawa.
\newblock Complete determination of the number of {G}alois points for a smooth
  plane curve.
\newblock {\em Rend. Semin. Mat. Univ. Padova}, 129:93--113, 2013.

\bibitem[Fuk14]{FukAut}
S.~Fukasawa.
\newblock Automorphism groups of smooth plane curves with many {G}alois points.
\newblock {\em Nihonkai Math. J.}, 25(1):69--75, 2014.

\bibitem[GM98]{GM}
R.~M. Guralnick and K.~Magaard.
\newblock On the minimal degree of a primitive permutation group.
\newblock {\em J. Algebra}, 207(1):127--145, 1998.

\bibitem[GN95]{GN}
R.~M. Guralnick and M.~G. Neubauer.
\newblock Monodromy groups of branched coverings: the generic case.
\newblock In {\em Recent developments in the inverse {G}alois problem
  ({S}eattle, {WA}, 1993)}, volume 186 of {\em Contemp. Math.}, pages 325--352.
  Amer. Math. Soc., Providence, RI, 1995.

\bibitem[GS07]{GS}
R.~M. Guralnick and J.~Shareshian.
\newblock Symmetric and alternating groups as monodromy groups of {R}iemann
  surfaces. {I}. {G}eneric covers and covers with many branch points.
\newblock {\em Mem. Amer. Math. Soc.}, 189(886):vi+128, 2007.
\newblock With an appendix by Guralnick and R. Stafford.

\bibitem[Har77]{Hartshorne}
R.~Hartshorne.
\newblock {\em Algebraic geometry}.
\newblock Springer-Verlag, New York-Heidelberg, 1977.
\newblock Graduate Texts in Mathematics, No. 52.

\bibitem[Har79]{H}
J.~Harris.
\newblock Galois groups of enumerative problems.
\newblock {\em Duke Math. Journal}, 46(4):685--724, 1979.

\bibitem[Har82]{JHCurves}
J.~Harris.
\newblock {\em Curves in projective space}, volume~85 of {\em S\'{e}minaire de
  Math\'{e}matiques Sup\'{e}rieures [Seminar on Higher Mathematics]}.
\newblock Presses de l'Universit\'{e} de Montr\'{e}al, Montreal, Que., 1982.
\newblock With the collaboration of David Eisenbud.

\bibitem[Har95]{harrisAG}
J.~Harris.
\newblock {\em Algebraic geometry}, volume 133 of {\em Graduate Texts in
  Mathematics}.
\newblock Springer-Verlag, New York, 1995.
\newblock A first course, Corrected reprint of the 1992 original.

\bibitem[Laz04]{Laz1}
R.~Lazarsfeld.
\newblock {\em Positivity in algebraic geometry. {I}}, volume~48 of {\em
  Ergebnisse der Mathematik und ihrer Grenzgebiete. 3. Folge. A Series of
  Modern Surveys in Mathematics [Results in Mathematics and Related Areas. 3rd
  Series. A Series of Modern Surveys in Mathematics]}.
\newblock Springer-Verlag, Berlin, 2004.
\newblock Classical setting: line bundles and linear series.

\bibitem[Mat73]{Mather}
J.~N. Mather.
\newblock Generic projections.
\newblock {\em Ann. of Math. (2)}, 98:226--245, 1973.

\bibitem[Mir95]{Miranda}
R.~Miranda.
\newblock {\em Algebraic curves and {R}iemann surfaces}, volume~5 of {\em
  Graduate Studies in Mathematics}.
\newblock American Mathematical Society, Providence, RI, 1995.

\bibitem[Miu02]{Miura1}
K.~Miura.
\newblock Field theory for function fields of plane quintic curves.
\newblock {\em Algebra Colloq.}, 9(3):303--312, 2002.

\bibitem[MY00]{MY1}
K.~Miura and H.~Yoshihara.
\newblock Field theory for function fields of plane quartic curves.
\newblock {\em J. Algebra}, 226(1):283--294, 2000.

\bibitem[PS05]{PS}
G.~P. Pirola and E.~Schlesinger.
\newblock Monodromy of projective curves.
\newblock {\em J. Algebraic Geom.}, 14(4):623--642, 2005.

\bibitem[Seg88]{Segre}
C.~Segre.
\newblock Un' osservazione sui sistemi di rette degli spazi superiori.
\newblock {\em Rendiconti del Circolo Matematico di Palermo}, Tomo II:148--349,
  1888.

\bibitem[Ser06]{Sernesi}
E.~Sernesi.
\newblock {\em Deformations of algebraic schemes}, volume 334 of {\em
  Grundlehren der Mathematischen Wissenschaften [Fundamental Principles of
  Mathematical Sciences]}.
\newblock Springer-Verlag, Berlin, 2006.

\bibitem[Tev03]{ProjDual}
E.~A. Tevelev.
\newblock Projectively dual varieties.
\newblock {\em J. Math. Sci. (N.Y.)}, 117(6):4585--4732, 2003.
\newblock Algebraic geometry.

\bibitem[Yos01]{Y}
H.~Yoshihara.
\newblock Function field theory of plane curves by dual curves.
\newblock {\em J. Algebra}, 239(1):340--355, 2001.

\bibitem[Yos03]{Yoshiara}
H.~Yoshihara.
\newblock Galois points for smooth hypersurfaces.
\newblock {\em J. Algebra}, 264(2):520--534, 2003.

\bibitem[Zak93]{Zak}
F.~L. Zak.
\newblock {\em Tangents and secants of algebraic varieties}, volume 127 of {\em
  Translations of Mathematical Monographs}.
\newblock American Mathematical Society, Providence, RI, 1993.
\newblock Translated from the Russian manuscript by the author.

\end{thebibliography}
 
\end{document}